\newtheorem{theorem}{Theorem}[section]
\newtheorem{lemma}[theorem]{Lemma}
\newcommand{\R}{\mathbb R}
\newcommand{\Q}{\mathbb Q}
\newcommand{\N}{\mathbb N}
\newcommand{\Z}{\mathbb Z}
\newcommand{\eps}{\varepsilon}
\newcommand{\A}{\mathcal A}
\newcommand{\E}{\mathbb{E}}
\renewcommand{\P}{\mathbf{P}}
\newcommand{\prob}{\P}
\renewcommand{\Q}{\mathbf{Q}}
\def\Reff{R_{\mathrm{eff}}}
\newcommand{\comb}{\mathrm{Comb}}
\newcommand{\lra}{\leftrightarrow}
\begin{document}
\begin{frontmatter}

\title{Nonconcentration of return times}
\runtitle{Nonconcentration of return times}

\begin{aug}
\author[A]{\fnms{Ori} \snm{Gurel-Gurevich}\ead[label=e1]{origurel@math.ubc.ca}}
\and
\author[A]{\fnms{Asaf} \snm{Nachmias}\corref{}\ead[label=e2]{asafnach@math.ubc.ca}}
\runauthor{O. Gurel-Gurevich and A. Nachmias}
\affiliation{University of British Columbia}
\address[A]{Department of Mathematics \\
University of British Columbia\\
121-1984 Mathematics Rd\\
Vancouver, British Columbia\\
Canada V6T 1Z2\\
\printead{e1}\\
\hphantom{E-mail: }\printead*{e2}} 
\end{aug}

\received{\smonth{9} \syear{2010}}
\revised{\smonth{4} \syear{2012}}

%
\begin{abstract}
We show that the distribution of the first return time $\tau$ to the
origin, $v$, of a simple random walk on an infinite recurrent graph is
heavy tailed and nonconcentrated. More precisely, if $d_v$ is the
degree of $v$, then for any $t\geq1$ we have
\[
\P_v(\tau\ge t) \ge\frac{c }{ d_v \sqrt{t}}
\]
and
\[
\P_v(\tau= t \mid\tau\geq t) \leq\frac{C \log( d_v t) }{ t}
\]
for some universal constants $c>0$ and $C<\infty$. The first bound is
attained for all $t$ when the underlying graph is $\Z$, and as for the
second bound, we construct an example of a recurrent graph $G$ for
which it is attained for infinitely many $t$'s.

Furthermore, we show that in the \textit{comb} product of that graph $G$
with $\Z$, two independent random walks collide infinitely many times
almost surely. This answers negatively a question of Krishnapur and
Peres
[\textit{Electron. Commun. Probab.} \textbf{9} (2004) 72--81]
who asked whether every comb product of two infinite
recurrent graphs has the finite collision property.
\end{abstract}

%
\begin{keyword}[class=AMS]
\kwd{05C81}
\end{keyword}
\begin{keyword}
\kwd{Random walks}
\kwd{return times}
\kwd{finite collision property}
\end{keyword}

\end{frontmatter}

\section{Introduction}
\subsection{Return times}
In this paper we study the distribution of return times of a simple
random walk $X_t$ on an infinite connected graph $G=(V,E)$ with finite
degrees. For $v\in V$, the \textit{hitting time} of $v$ by $X$, denoted
$\tau_v$, is defined by $\tau_v = \min\{ t \geq1\dvtx  X_t = v \}$. When
$X$ starts at $v$ (i.e., $X_0=v$), we call $\tau_v$ the \textit{return
time} to $v$. As usual, the law of $X$ when $X_0=v$ is denoted by
$\P_v$. Our main result is that on any graph these times are heavy
tailed, with exponent at most ${1}/{2}$, and nonconcentrated.
%
\begin{theorem}\label{mainthm1} Let $G=(V,E)$ be an infinite connected
graph with finite degrees $\{d_v\}_{v\in V}$. There exists a universal
constant $c>0$ such that for any $t\geq1$ we have
\[
\P_v(\tau_v \ge t) \ge\frac{c}{d_v \sqrt{t}}.
\]
\end{theorem}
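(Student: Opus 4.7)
If $G$ is transient, the bound is immediate: $\P_v(\tau_v = \infty) > 0$ gives a uniform positive lower bound dominating $c/(d_v\sqrt{t})$ for large $t$, while $\P_v(\tau_v \ge t) = 1$ for $t \le 2$ (no self-loops) handles small $t$. So henceforth assume $G$ is recurrent.

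My starting point is an effective-resistance estimate. Since $G$ is infinite, connected, and locally finite, K\"onig's lemma furnishes an infinite simple ray $v = v_0, v_1, v_2, \ldots$ in $G$. Rayleigh's monotonicity law applied to the subgraph consisting of the first $n$ rungs of this ray gives $R_{\rm eff}^G(v, v_n) \le n$, and the classical escape-probability formula yields
\[
  \P_v\!\bigl(\tau_{v_n} < \tau_v^+\bigr) \;=\; \frac{1}{d_v\, R_{\rm eff}^G(v, v_n)} \;\ge\; \frac{1}{d_v n}.
\]
Since $\tau_v \ge \tau_{v_n} \ge n$ on the escape event, this already gives $\P_v(\tau_v \ge n) \ge 1/(d_v n)$. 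Our target, however, is $\P_v(\tau_v \ge n^2) \ge c/(d_v n)$, which is stronger by a factor of $\sqrt{t}$.

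The main technical step is therefore to argue that, on the event $\{\tau_{v_n} < \tau_v^+\}$, the hitting time $\tau_{v_n}$ itself is of order $n^2$ with uniformly positive conditional probability; combined with the escape estimate and setting $t \asymp n^2$, this would close the gap. For $G = \Z$ this is classical via Doob's $h$-transform: the conditional walk on $\{1,\ldots,n\}$ is a birth-and-death chain with drift $1/x$ (analogous to BES$(3)$) whose expected hitting time of $n$ from $1$ is $\Theta(n^2)$. In general I would try to reduce to this picture using the commute-time identity $\E_v^{G_m}[\tau_{v_n}] + \E_{v_n}^{G_m}[\tau_v] = 2|E(G_m)|\, R_{\rm eff}^{G_m}(v, v_n)$ on finite wired truncations $G_m$ of $G$ and passing carefully to the limit $m \to \infty$. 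The obstacle I anticipate is that if $G$ contains shortcut paths from $v$ to $v_n$ of length $\ll n$, the Doob-transformed walk may traverse a shortcut and reach $v_n$ quickly; to cope with this one must adapt the target vertex $v_n$ to the geometry, choosing it so that its graph distance from $v$ is comparable to the effective resistance, and possibly averaging the basic escape estimate over dyadic scales to recover the $\sqrt{t}$ factor uniformly.
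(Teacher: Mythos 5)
Your overall strategy --- escape to a far point with probability $\gtrsim 1/(d_v n)$, then show the walk cannot come back in fewer than $\asymp n^2$ steps with constant probability --- is the same strategy as the paper's. But your proposal has two genuine gaps, and you identify only the second of them.

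\textbf{The transient case is not immediate.} You dismiss it by observing that $\P_v(\tau_v=\infty)>0$ dominates $c/(d_v\sqrt{t})$ for large $t$, and that $\P_v(\tau_v\ge t)=1$ for $t\le 2$. But the constant in the theorem is \emph{universal}. For a transient graph with $R:=R_{\rm eff}(v\leftrightarrow\infty)$ very large (but finite), $\P_v(\tau_v=\infty)=1/(d_v R)$ only dominates $c/(d_v\sqrt{t})$ when $t\gtrsim R^2$; for $3\le t\ll R^2$ you have no argument. The paper avoids this by never splitting on transience versus recurrence: instead, for each fixed $t$, it asks whether there exists a radius $r$ with $R_{\rm eff}(v\leftrightarrow\partial B(v,r))\ge 4\sqrt{t}$. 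If not, the escape probability itself gives $1/(4d_v\sqrt{t})$; if yes, the recurrent-type argument runs, even though the graph may be transient.

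\textbf{The main technical step is exactly where your proposal stops, and the tools you suggest do not close it.} You correctly flag that the Doob-transformed walk may traverse a shortcut. But more is wrong: even your initial claim that $\tau_{v_n}\ge n$ on the escape event is false, because a simple ray supplies no lower bound on the graph distance from $v$ to $v_n$, and the walk is not confined to the ray. Choosing $v_n$ at graph distance $n$ fixes that particular claim, but the conditioned walk can still hit it quickly, and neither the commute-time identity (which only controls the \emph{expected} hitting time, in the wrong direction) nor dyadic averaging is clearly going to produce the needed lower-tail bound with a universal constant.

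The paper resolves this with a single martingale lemma that sidesteps the shortcut problem entirely. Let $f$ be the unit-current potential with $f(v)=0$, $f(y)=R_{\rm eff}(v\leftrightarrow y)$. Because the current through any single edge is at most $1$, $f$ is $1$-Lipschitz \emph{on edges} (not with respect to graph distance), so $f^2(X_t)-t$ is a supermartingale up to $\tau_v$. Optional stopping then gives, for any $x$,
$$\P_x\!\bigl(\tau_v \le \eps\, R_{\rm eff}(x\leftrightarrow v)^2\bigr)\le\eps\, ,$$
a lower-tail bound on the hitting time that depends only on effective resistance --- shortcut edges are already accounted for because crossing one changes $f$ by at most $1$. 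The paper then picks the target not as a single ray vertex but as the level set $N(S)$ of $f$ at height $\approx 2\sqrt{t}$, so that $R_{\rm eff}(v\leftrightarrow N(S))\in[2\sqrt{t},\,2\sqrt{t}+1]$; the escape probability to $N(S)$ is $\ge 1/(d_v(2\sqrt{t}+1))$, and from any $u\in N(S)$ the lemma with $\eps=1/4$ gives $\P_u(\tau_v\ge t)\ge 3/4$. Multiplying closes the proof. This potential-level-set device, together with the $f^2(X_t)-t$ supermartingale, is the missing ingredient in your sketch.
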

%
\begin{theorem}\label{mainthm2} Let $G=(V,E)$ be an infinite connected
graph with finite degrees $\{d_v\}_{v\in V}$. There exists a universal
constant $C<\infty$ such that for any $t\geq1$ we have
\[
\P_v(\tau_v = t \mid\tau_v \ge t) \le
\frac{C \log(d_v
t)}{t}.
\]
\end{theorem}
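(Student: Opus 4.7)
The plan is to prove Theorem~\ref{mainthm2} via a path-surgery argument. From each trajectory $\gamma$ with $\tau_v(\gamma)=t$, I will construct many extensions $\gamma'$ with $\tau_v(\gamma')\in[t,2t]$ by inserting short local detours, and use a double-count to conclude that $\P_v(\tau_v=t)$ is small compared to $\P_v(\tau_v\in[t,2t])\le\P_v(\tau_v\ge t)$. In the basic single-scale version, given $\gamma=(v=X_0,X_1,\ldots,X_t=v)$ with $X_i\ne v$ for $1\le i\le t-1$, a position $s\in\{1,\ldots,t-1\}$, and a vertex $u'\sim X_s$ with $u'\ne v$, the path $\gamma'=(X_0,\ldots,X_s,u',X_s,X_{s+1},\ldots,X_t)$ has length $t+2$ and first return time $t+2$. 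Writing the sum over $(\gamma,s,u')$ in two ways gives
\[
\sum_{\gamma:\tau_v(\gamma)=t}\P_v(\gamma)\sum_{s=1}^{t-1} r(X_s(\gamma))\ \le\ t\cdot\P_v(\tau_v=t+2),
\]
where $r(x):=\sum_{u'\sim x,\,u'\ne v}\frac{1}{d_x d_{u'}}=\P_x(X_2=x,\,X_1\ne v)$ is the per-step detour weight, and the factor $t$ bounds the number of back-and-forth triples in a single $\gamma'$.

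The one-step weight $r(x)$ can be arbitrarily small (of order $1/\max_{u'\sim x}d_{u'}$), so a single scale does not suffice. I would repeat the construction with detours of length $2k$ for $k=1,\ldots,K$ where $K=\lceil\log_2(d_v t)\rceil$, using the weight $r_k(x):=\P_x(X_{2k}=x,\,X_1,\ldots,X_{2k-1}\ne v)$; the inserted paths then lie in $[t,t+2K]\subseteq[t,2t]$. Summing the scaled inequalities over $k$ and arguing that for a typical $\gamma$ the double sum $\sum_{s=1}^{t-1}\sum_{k=1}^{K} r_k(X_s(\gamma))$ is of order $t$ (some dyadic scale witnesses a constant-order round-trip probability at each step) gives
\[
\P_v(\tau_v=t)\ \le\ \frac{C\log(d_v t)}{t}\,\P_v(\tau_v\in[t,2t])\ \le\ \frac{C\log(d_v t)}{t}\,\P_v(\tau_v\ge t).
\]

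The main technical obstacle is the typical-detour-weight claim: one must show that for a random path with $\tau_v=t$, the sum $\sum_s\sum_k r_k(X_s)$ is of order $t$ with probability $\Omega(1)$ conditional on $\tau_v=t$. This amounts to a heat-kernel estimate for the random walk absorbed at $v$, saying that most vertices visited by a surviving trajectory are not trapped in a tiny corner of the graph near $v$; consequently for some dyadic scale $k\le K$ the walk from such a vertex can make a round trip to itself while avoiding $v$ with probability $\Omega(1)$. Once this is in place, the per-scale multiplicity bound (each $\gamma'$ of length $t+2k$ is produced by at most $t$ insertion positions) combined with the union over the $K$ scales yields Theorem~\ref{mainthm2}.
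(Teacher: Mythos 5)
Your approach is genuinely different from the paper's. The paper proves Lemma~\ref{spectra}, a spectral decomposition $\P_v(\tau_v=t)=\int_{-1}^1 x^{t-2}\,d\mu$ for a finite measure $\mu$, splits the integral at $|x|=1-\frac{4\log(d_vt)}{t}$, controls the inner part via $\P_v(\tau_v\ge t)\ge\int_A\frac{x^{t-2}}{1-x^2}\,d\mu$, and controls the outer part using the lower bound of Theorem~\ref{mainthm1} together with $\mu([-1,1])\le 1$. Your path-surgery argument, if it worked, would be a more combinatorial alternative that avoids spectral theory entirely; it is noteworthy, though, that your sketch never invokes Theorem~\ref{mainthm1}, which in the paper is precisely where the $d_v$ dependence enters, so the fact that you set $K=\lceil\log_2(d_vt)\rceil$ is not grounded in anything you have established.

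The gap you flag as the ``main technical obstacle'' is real, and in fact I do not believe the claim is true as stated. You need that, conditionally on $\tau_v=t$, the detour weight $\sum_{s=1}^{t-1}\sum_{k=1}^{K} r_k(X_s)$ is $\Omega(t)$ with probability bounded below, where $r_k(x)=\P_x(X_{2k}=x,\ \tau_v>2k)$. But on a $D$-regular graph every $r_k(x)$ is at most $\P_x(X_{2k}=x)$, and for the $D$-regular tree (a legitimate input to the theorem) one has $\P_x(X_{2k}=x)\le (4(D-1)/D^2)^{k}\approx (4/D)^k$, so $\sum_{k\ge 1} r_k(x)=O(1/D)$ uniformly in $x$. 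Your double-count then yields only $\P_v(\tau_v=t)\lesssim D\,\P_v(\tau_v\ge t)$, which is vacuous for $D\gg\log(Dt)$ and does not recover the statement of Theorem~\ref{mainthm2}. More generally, the total round-trip weight over all scales satisfies $\sum_{k\ge1} r_k(x)=d_x\Reff(x\lra v)-1$, and while this can be large, there is no reason it concentrates in the first $K\approx\log(d_vt)$ dyadic scales; at a high-degree vertex the mass can be spread over $\sim d_x$ scales. So the key lemma is both unproved and, at least in the form you state it (``for some dyadic scale $k\le K$ \ldots probability $\Omega(1)$''), false. To salvage the approach you would need a much more delicate accounting that weights the contribution of each visited vertex by something comparable to $1/d_{X_s}$, and at that point you are implicitly reconstructing the reversibility/self-adjointness that makes the spectral argument work; the paper's route through Lemma~\ref{spectra} handles this uniformly in the degree sequence with no extra work.
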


The proof of Theorem~\ref{mainthm1} uses electrical network and
martingale arguments, and the proof of Theorem~\ref{mainthm2}
incorporates spectral decomposition of killed random walks. These two
inequalities are sharp up to multiplicative constants. Indeed, for
Theorem~\ref{mainthm1}, it is easy to see that in a copy of $\N$,
together with $d-1$ new vertices who are attached only to $0$ we have
$\prob_0(\tau_0\ge t) \approx\frac{c }{ d \sqrt{t}}$.

Constructing a graph which saturates the inequality of Theorem \ref
{mainthm2} is harder, and we perform this in Section~\ref{secsharp}.
The sharpness of Theorem~\ref{mainthm2} is perhaps more surprising
since most natural examples exhibit an upper bound of order $1/t$. For
example, in $\Z$ it is classical (see~\cite{Feller2}) that $\prob_v(\tau_v \geq t) \approx t^{-1/2}$ and $\prob_v(\tau_v = t)\approx t^{-3/2}$.
It is likely that if the distribution of $\tau_v$ is regular varying in
some sense it is possible to prove a $1/t$ upper bound. Indeed, in the
construction in Section~\ref{secsharp} the rate of decay of $\prob_v(\tau_v \geq t)$ has extremely different behavior at different scales
of~$t$.

It is a well-known fact that $\E\tau_v= \infty$ for any infinite
connected graph. This of course follows from Theorem~\ref{mainthm1},
but a simpler way to see it is to consider the Green function
\[
g(u) = \E_v \sum_{t=1}^{\tau_v} {
\mathbf1}_{\{X_t = u\}},
\]
that is, the expected number of visits to $u$ before returning to $v$.
It is easy to check that the vector $\{g(u)\}_{u\in G}$ is invariant
under the random walk operator and that $g(v)=1$. Hence, $g(u)=d_u/d_v$
for all $u$ in the connected component of $v$. Furthermore, it is clear
that $\sum_{u} g(u) = \E\tau_v$, and since $G$ is connected and
infinite we deduce that $\E\tau_v = \infty$.

\subsection{The finite collision property}\label{finitecolsec}
The construction of Section~\ref{secsharp} is related to the finite
collision property. Recall that an infinite graph $G$ has the
\textit{finite collision property} if two simple random walks $X_t$ and
$Y_t$ collide only finitely many times almost surely, that is, the set
$\{t\dvtx X_t = Y_t\}$ is almost surely finite. It is not hard to see,
using reversibility, that any bounded-degree transient graph has the
finite collision property, and it is an easy exercise to check that
$\Z$ and $\Z^2$ do not have the finite collision property. In fact, any
transitive recurrent graph does not have the finite collision property
(to see this, note that in a transitive graph the number of collisions
has a geometric distribution, hence it is a.s. finite if and only if it
has finite mean, and this mean is finite if and only if the graph is
transient).

It is a surprising discovery of Krishnapur and Peres~\cite{KP} that
there exist recurrent graphs with the finite collision property. In
these graphs, both random walks visit every vertex infinitely often,
but only collide finitely many times. Their constructions involve the
\textit{comb} product of two graphs and is defined as follows. Given two
graphs $G$, $H$ and a vertex $v\in H$, define $\mathrm{Comb}_v(G,H)$ to be the
graph with vertex set $V(G) \times V(H)$ and edge set
\[
\bigl\{ \bigl\{(x,w), (x,z)\bigr\}\dvtx  \{w,z\} \in E(H), x\in V(G) \bigr\} \cup
\bigl\{ \bigl\{ (x,v), (y,v) \bigr\}\dvtx  \{x,y\} \in E(G) \bigr\}.
\]

Krishnapur and Peres prove in~\cite{KP} that $\mathrm{Comb}_0(G,\Z)$ and
$\mathrm{Comb}_{(0,0)}(G, \Z^2)$ have the finite collision property whenever $G$
is an infinite recurrent graph with bounded degrees. They asked (see
first question of Section 4 of~\cite{KP}) whether $\mathrm{Comb}_v(G,H)$ has
the finite collision property whenever $G$ and $H$ are infinite
recurrent graphs. Our next result answers their question negatively.
%
\begin{theorem} \label{nocollisionthm} There exists a bounded-degree,
connected, infinite graph $H$ and a vertex $v\in H$ such that
$\mathrm{Comb}_{v}(\Z, H)$ does not have the finite collision property.
\end{theorem}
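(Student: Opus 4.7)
The plan is as follows. We take $H$ to be (a mild variant of) the recurrent bounded-degree graph constructed in Section~\ref{secsharp}. That construction builds $H$ out of an infinite sequence of ``trap'' gadgets $T_k$ attached to $v$ through successively longer corridors, designed so that an excursion of the walker from $v$ has a certain probability of entering $T_k$, mixing inside it, and then returning to $v$ after a time of order $t_k$ along a sequence $t_k\to\infty$. This is precisely what makes the hazard rate $\P_v(\tau_v=t_k\mid\tau_v\ge t_k)$ as large as the $\log(t_k)/t_k$ upper bound of Theorem~\ref{mainthm2} permits.

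Consider two independent simple random walks $X,Y$ on $\comb_v(\Z,H)$, both started at $(0,v)$. We produce, for each sufficiently large $k$, an event $E_k$ of probability bounded below by an absolute constant on which at least one collision occurs during the window $[t_k,2t_k]$. The event $E_k$ requires three things to happen simultaneously at some time $t\asymp t_k$: the spine coordinates $X^\pi_t,Y^\pi_t$ (the projections of $X_t,Y_t$ onto $\Z$) coincide, so that both walkers are in the same tooth; both walkers are currently inside the trap gadget $T_k$ of that common tooth; and both have been inside $T_k$ long enough to have mixed.

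The first requirement holds with positive density in $t$ by two-dimensional recurrence of the spine-difference: the two spine-coordinate processes are time-changed simple random walks on $\Z$ whose operational clocks (the number of spine visits) each advance by order $\sqrt{t}$ by time $t$, thanks to the $c/\sqrt{t}$ excursion tail of Theorem~\ref{mainthm1}. Consequently $|X^\pi_t|\asymp t^{1/4}$ and the coincidence event has probability of order $t^{-1/4}$ at each time. The second requirement, conditional on the spine coordinates agreeing, holds with constant probability because the arcsine law for heavy-tailed renewals gives that the current excursion-age of each walker is $\asymp t_k$ with bounded-below probability at times $t\asymp t_k$, and conditionally on this age both walkers are in $T_k$ by the design of $H$. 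Once $E_k$ holds, both walkers are approximately uniform on $T_k$ and spend an additional $|T_k|$ or so steps there, so the expected number of coincidences is of order one, and a second-moment calculation inside $T_k$ yields at least one collision with probability bounded below. Finally, since the $E_k$ at different scales occur in essentially disjoint time windows and involve excursions starting after independent renewal points, they are close to independent, so the second Borel--Cantelli lemma gives almost surely infinitely many collisions.

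The main obstacle is establishing the localization-inside-$T_k$ and mixing-inside-$T_k$ claims quantitatively. Theorem~\ref{mainthm2} only supplies one-dimensional information about the return-time density at $t_k$, whereas the argument requires that the killed walk on $H\setminus\{v\}$ has converged to (a good approximation of) the quasi-stationary distribution on $T_k$ by time $t_k$, and that this quasi-stationary distribution is spread out enough over $T_k$ that two independent copies collide with positive probability during the remaining lifetime. Both statements have to be proved directly from the explicit construction of Section~\ref{secsharp}, presumably via a spectral decomposition of the sub-Markovian generator on $T_k$ and an estimate of the spectral gap relative to the decay rate.
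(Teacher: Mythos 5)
Your high-level plan matches the paper's: take $H$ to be the expander-decorated half-line $G(\{h_i,n_i\})$ of Section~\ref{secsharp}, show that at each scale $i$ the two walkers collide with probability bounded away from zero, and conclude by a Borel--Cantelli-type argument. But there are genuine gaps. The most concrete one: you infer that the spine clock advances at rate $\sqrt{t}$ from the lower bound $\P_v(\tau_v\geq t)\geq c/\sqrt{t}$ of Theorem~\ref{mainthm1}. That inference goes the wrong way --- a \emph{lower} bound on the excursion-length tail gives an \emph{upper} bound, not a lower bound, on the number of renewals. In this particular graph the spine clock advances much more slowly than $\sqrt{t}$: by Lemma~\ref{numexcursion} the number of spine steps by time $t=kh_in_i$ is of order $kh_i=t/n_i$, so the spine position is $\Theta(\sqrt{t/n_i})$, not $t^{1/4}$ (this slow advance is exactly what the construction exploits --- the walker is trapped inside the expander for most of its time). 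Consequently the spine-coincidence probability at time $kh_in_i$ is $\Theta\big((kh_i)^{-1/2}\big)$, not $t^{-1/4}$.

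Beyond this, you assert without proof that the union over $t\asymp t_k$ of the three-way coincidence events has probability bounded below; the paper obtains this via a Paley--Zygmund argument computing the first and second moments of $\sum_k I_k$ (Lemmas~\ref{firstmoment} and~\ref{secondmoment}). You correctly flag localization and mixing inside the gadget as the crux, but you propose to handle it through the quasi-stationary distribution of the killed walk, which is not what is needed and much harder than necessary; the paper instead proves and uses the elementary expander estimates of Section~3 --- Lemma~\ref{expander} (escape/return times), Lemma~\ref{mixingstopping} (near-uniformity of the walk killed at the exit), and Lemma~\ref{collision_inside_expander} (two walks inside the same expander collide before exiting with positive probability). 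Finally, invoking ``close to independence'' across scales for Borel--Cantelli is not rigorous as stated; the paper avoids this by showing the uniform conditional bound $\P(\A_i\mid X_{T_{i-1}},Y_{T_{i-1}})\geq c$, which yields infinitely many collisions almost surely without any independence claim.
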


We do not use Theorems~\ref{mainthm1} and~\ref{mainthm2} for the proof
of Theorem~\ref{nocollisionthm}; however, the graph for which Theorem \ref
{mainthm2} is saturated (see Section~\ref{secsharp}) is the graph $H$
in the statement of the above theorem. The important property of this
graph is that, roughly, at certain scales it behaves like a finite
graph. This property is crucial both for showing the sharpness of
Theorem~\ref{mainthm2} and for the proof of Theorem~\ref{nocollisionthm}.

In fact, general results in this flavor have recently been obtained.
Barlow, Peres and Sousi~\cite{BPS} give a general condition for a graph
not to have the finite collision property. While this condition fails
for the graph constructed in the proof of Theorem~\ref{nocollisionthm},
they use it to show that various natural graphs with fractal geometry
do not have the finite collision property.

\subsection{Extensions and questions} Theorems~\ref{mainthm1} and
\ref
{mainthm2} can be extended to the setting of finite graphs. Indeed, the
proofs of both theorems can be extended so that they hold for any
finite graph and any $t \leq R^2$, where $R$ is the effective
resistance diameter of the graph $R=\max_{v,u} \Reff(v\lra u)$. These
extensions to the proof are straightforward. In particular cases it is
even possible to prove stronger assertions; see, for example, Lemma
\ref{expander}.

We cannot expect Theorems~\ref{mainthm1} and~\ref{mainthm2} to hold for
general hitting times. Indeed, if $u$ is a vertex such that its removal
leaves $v$ in a finite component (these are sometimes called \textit{cutpoints}), then the distribution of $\tau_u$ started from $v$ has
exponential decay since as much as the distribution of $\tau_u$ started
from $v$ is concerned, the graph is finite. However, perhaps there is
hope to prove similar estimates when $u$ is not such a cutpoint.

To demonstrate that Theorem~\ref{mainthm2} does not hold for hitting
times in general, consider the following example: the graph is simply
the natural numbers, with $2^{2^n}$ edges between $n$ and $n+1$. If a
simple random walk starts at $0$, there is a positive probability it
will never take a step backward, that is, $X_i=i$ for all $i$. This
means that $\P_0(\tau_n=n \mid \tau_n \ge n)$ does not decay to $0$. Of
course, this graph has unbounded degrees, so it remains to see whether
a bounded degree example exists. Similar questions can be also asked
about \textit{commute times}, that is, the first time to hit some
specific vertex and return to the origin. These retain some of the
symmetry of return times, and perhaps Theorems~\ref{mainthm1} and \ref
{mainthm2} can be extended to them.

Finally, is it true that for any $t \geq1$ the graph $\N$ minimizes
the quantity $\prob_0(\tau_0 \geq t)$ of all connected infinite graphs
with the origin having degree 1?

\subsection{Notation} We say $f(r) \approx g(r)$ when there exists a
constant $C$ such that $C^{-1} f(r) \leq g(r) \leq C f(r)$. We denote
by $C$ and $c$ positive constants where $c$ will usually denote a
``small enough'' constant and $C$ a ``large enough'' constant. The
values of $C$ and $c$ will change occasionally, even within the same
formula. We will not be strict about assigning noninteger values to
integer variable, and when doing so we always assign the floored value.

\section{\texorpdfstring{Proof of Theorems \protect\ref{mainthm1} and \protect\ref{mainthm2}}
{Proof of Theorems 1.1 and 1.2}}\label{secmainthm}
We begin with a few lemmas. For background about effective resistance
we refer the reader to~\cite{LP}.
%
\begin{lemma}\label{getfar} Let $G$ be a finite graph. For any two
vertices $x,y$ and any $\eps>0$ we have
\[
\P_x\bigl(\tau_y \leq\eps\bigl(\Reff(x \lra y)
\bigr)^2\bigr) \leq\eps,
\]
where $\Reff(x \lra y)$ is the effective resistance between $x$ and
$y$, when $G$ is considered as an electric network with unit resistances.
\end{lemma}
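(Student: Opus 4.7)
The plan is to use the voltage function of a unit current flow from $x$ to $y$ as a potential and to apply Doob's $L^2$ maximal inequality to an associated submartingale along the walk. Let $R := \Reff(x \lra y)$ and let $\phi: V \to \R$ be the voltage function with $\phi(y)=0$, $\phi(x)=R$, harmonic on $V \setminus \{x,y\}$ (a unit current injected at $x$ and extracted at $y$); the maximum principle gives $0 \leq \phi \leq R$ with $\phi(z)=0$ only at $z=y$. Define $N_t := R - \phi(X_t)$, so $N_0 = 0$ and $N_t = R$ iff $X_t = y$. A one-step computation shows $N_t$ is a non-negative submartingale until $\tau_y$: at $z \neq x,y$ it is a martingale by harmonicity of $\phi$, while at $z = x$ Kirchhoff's law $\sum_{w\sim x}(\phi(x)-\phi(w)) = 1$ gives an upward drift of $1/d_x$.

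The key ingredient is bounding the conditional quadratic variation $\sigma^2(z) := \frac{1}{d_z}\sum_{w \sim z}(\phi(z)-\phi(w))^2$ by $1$ uniformly in $z$. This reduces to showing that the current $I(u,v) = \phi(u)-\phi(v)$ through any single edge satisfies $|I(u,v)| \leq 1$. I would prove this by a cut argument: assuming $\phi(u) > \phi(v)$, the super-level set $S := \{z : \phi(z) > \phi(v)\}$ contains $x$ (the unique maximum of $\phi$) but not $y$; every edge crossing from $S$ to $V \setminus S$ carries strictly positive current out of $S$ (from higher to lower potential); and by Kirchhoff these currents sum to $1$. Hence in particular $I(u,v) \leq 1$, and by symmetry $|I_e|\leq 1$ on every edge.

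Once $\sigma^2(z) \leq 1$ is established, the standard quadratic-variation martingale $N_t^2 - \sum_{s<t}\sigma^2(X_s)$, together with optional stopping at $\tau := t \wedge \tau_y$, gives $\E[N_\tau^2] \leq t$. Since $N_t \geq 0$ is a submartingale, $N_t^2$ is a non-negative submartingale, and Doob's $L^1$ maximal inequality yields
$$\P_x(\tau_y \leq t) \;=\; \P_x\Bigl(\sup_{s \leq \tau} N_s \geq R\Bigr) \;\leq\; \frac{\E[N_\tau^2]}{R^2} \;\leq\; \frac{t}{R^2},$$
and setting $t = \epsilon R^2$ gives $\P_x(\tau_y \leq \epsilon R^2) \leq \epsilon$. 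I expect the subtlest step to be the edge-current bound $|I_e|\leq 1$ (a flow-theoretic fact requiring some care when $\phi$ takes equal values at distinct vertices); the rest is routine martingale analysis.
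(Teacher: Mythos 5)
Your proof is correct and follows essentially the same route as the paper: the Lipschitz bound on the potential via a cut argument bounding edge currents by $1$, a second-moment bound on the stopped process, and optional stopping. The only cosmetic differences are that you work with $\phi = R - f$ and invoke Doob's $L^2$ maximal inequality where plain Markov on $\E[N_\tau^2]$ suffices (since $N_{\tau_y}=R$ deterministically); also, your side claim that $\phi$ vanishes only at $y$ is false in general (think of vertices ``beyond'' $y$), but your argument only uses the inclusion $\{\tau_y\le t\}\subseteq\{\sup_{s\le\tau}N_s\ge R\}$, so this does not affect the proof.
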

\begin{pf}
Let $f\dvtx  G \to\R_+$ be the potential corresponding to a unit current
flow of the electrical network between $x$ and $y$. That is, $f$ is the
harmonic function on $G \setminus\{x,y\}$ with boundary values
$f(x)=0$ and $f(y)=\Reff(x \lra y)$ (as $G$ is finite $f$ is uniquely
determined). The associated unit current flow is an antisymmetric
function on directed edges $i\dvtx E(G)\to\R$ such that:
\begin{longlist}[(iii)]
\item[(i)] $\sum_{u \sim y} i(yu) =1$;
\item[(ii)] for any $u \in G \setminus\{x,y\}$, we have $\sum_{v
\sim
u} i(uv) = 0$;
\item[(iii)] for any oriented cycle $e_1,\ldots, e_m$, we have $\sum_{1 \leq j \leq m} i(e_j) = 0$.
\end{longlist}
Since we have unit edge resistances we get that $i(uv) = f(u)-f(v)$ for
any edge $uv$. We first observe that $f$ is a contraction; that is, for
any edge $uv$ we have $|f(u)-f(v)|\leq1$. Indeed, assume without loss
of generality that $f(v) < f(u)$ and let $s>0$ be a number such that
$f(v) < s \leq f(u)$. Consider the cut $(S,S^c)$ defined by $S=\{ w\dvtx
f(w) > s\}$. The sum of the unit current flow $i$ on edges leading from
$S$ to $S^c$ is $1$ and each edge receives nonnegative flow, hence
$f(u)-f(v)=i(uv)\le1$ (another way to see this is combining
Proposition 2.2 and Exercise 2.31 of~\cite{LP}). We deduce that
\[
\E \bigl[ f^2(X_t) - f^2(X_{t-1}) \mid
X_{t-1} \bigr] = \E \bigl[ \bigl(f(X_t) -
f(X_{t-1})\bigr)^2 \mid X_{t-1} \bigr] \leq1,
\]
when $X_{t-1} \neq y$, and hence$f^2(X_{t \wedge\tau_y})-t\wedge\tau_y$ is a supermartingale. Put $T=\eps\Reff(x \lra y)^2$. Optional
stopping yields that
\[
\E_x \bigl[ f^2(X_{T \wedge\tau_y}) \bigr] \leq
\E_x[T \wedge\tau_y] \leq T.
\]
If $\tau_y < T$, then $f^2(X_{T \wedge\tau_y}) = \Reff(x \lra y)^2$.
Thus, by Markov's inequality we get
\[
\P_x (\tau_y < T) \leq\frac{T }{\Reff(x \lra y)^2} \leq\eps,
\]
concluding the proof of the lemma.
\end{pf}


%
\begin{pf*}{Proof of Theorem~\ref{mainthm1}}
We prove the assertion
with $c=\frac14$. For $r>0$ we write $B(v,r)$ for the ball of radius
$r$ in $G$ according to the shortest path metric and write $\partial
B(v,r)$ for its boundary, that is, $\partial B(v,r) = B(v,r)\setminus
B(v,r-1)$. We consider the effective resistance $\Reff(v \lra\partial
B(v,r))$. Fix $t \geq1$. If for all $r>0$ we have that $\Reff(v \lra
\partial B(v,r)) \leq4 \sqrt{t}$ (this can only happen in the
transient case), then
\[
\P_v(\tau_v \geq t) \geq\lim_{r \to\infty}
\P_v \bigl(X_t \mbox{ hits } \partial B(v,r) \mbox{
before } v \bigr) \geq\frac{1 }{4 d_v \sqrt {t}}.
\]
Otherwise, let $r$ be the first radius such that $\Reff(v \lra
\partial
B(v,r)) \geq4\sqrt{t}$. As in the proof of Lemma~\ref{getfar} let $f$
be the harmonic function on $B(v,r)$ with $f(v)=0$ and $f(\partial
B(v,r))=\Reff(v \lra\partial B(v,r))$. Let $S$ be the set of vertices
$S= \{ u\dvtx  f(u) \leq2\sqrt{t} \}$. We saw in the proof of Lemma \ref
{getfar} that $f$ is a contraction. Hence, any vertex $x \in N(S)$,
where $N(S)$ denotes the neighbors of $S$ which are not in $S$, has
$2\sqrt{t} \leq f(x)\leq2\sqrt{t} + 1$. Let $f^+$ and $f^-$ be the
harmonic functions on $S \cup N(S)$ such that $f^+(v)=f^-(v)=0$ and
$f^+(N(S))=2\sqrt{t}+1$ and $f^-(N(S))=2\sqrt{t}$. The maximum
principle gives that $f^-(x) \leq f(x) \leq f^+(x)$ for all $x\in S
\cup N(S)$, and hence the total current flow associated with $f^+$
($f^-$) is larger (smaller) than~$1$. Therefore,
\[
2\sqrt{t} \leq\Reff\bigl(v \lra N(S)\bigr) \leq2\sqrt{t}+1.
\]
%
We have that
%
\begin{equation}
\label{hitfar} \P_v ( \tau_{N(S)}< \tau_v ) =
\frac{1 }{ d_v
\Reff(v \lra N(S))} \geq\frac{1 }{ d_v (2\sqrt{t}+1)},
\end{equation}
where $\tau_{N(S)}=\min\{t\ge1 \mid X_t \in N(S)\}$. The strong Markov
property implies
\[
\P_v(\tau_v \geq t) \geq\P_v (
\tau_{N(S)}<\tau_v ) \min_{u\in
N(S)}\P_u (
\tau_v \ge t ).
\]
To estimate the second probability on the right-hand side we apply
Lemma~\ref{getfar} with $\eps=1/4$. We deduce that this probability is
at least $3/4$. This together with (\ref{hitfar}) gives that
\[
\P_v(\tau_v \geq t) \geq\frac{3 }{4 d_v (2 \sqrt{t}+1)} \ge
\frac{1 }{
4d_v \sqrt{t}},
\]
concluding our proof.
\end{pf*}

%




The following is a well-known lemma in the context of return
probabilities. We include its proof here for completeness, and since we
were unable to find it in the literature in the context of \textit{first}
return probabilities.
%
\begin{lemma}[(Spectral decomposition)]\label{spectra} Let
$G=(V,E)$ be an infinite connected graph with finite degrees, and let
$v\in V$. Then there exists a finite measure $\mu$ on $[-1,1]$ such
that for all $t\geq2$ we have
\[
\P_v(\tau_v = t) = \int_{-1}^1
x^{t-2} \,d \mu.
\]
\end{lemma}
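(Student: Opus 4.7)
The plan is to realize $\P_v(\tau_v = t)$ as a matrix coefficient of a bounded self-adjoint operator and then invoke the spectral theorem for bounded self-adjoint operators on Hilbert space. First, decompose the event $\{\tau_v = t\}$ by conditioning on the first and last steps of the walk: for $t \geq 2$,
$$\P_v(\tau_v = t) = \sum_{u,w \neq v} P(v,u)\, R^{t-2}(u,w)\, P(w,v),$$
where $P(x,y)$ is the simple random walk transition kernel on $G$ and $R$ is the sub-stochastic kernel obtained from $P$ by killing the walk upon hitting $v$; that is, $R$ is indexed by $V \setminus \{v\}$ with entries $R(x,y) = P(x,y)$ for $x,y \neq v$.

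Next, I would observe that $R$ is a bounded self-adjoint operator on the weighted Hilbert space $\ell^2(V \setminus \{v\}, \pi)$ with reversible weights $\pi(u) = d_u$. Self-adjointness comes from the detailed balance identity $d_u P(u,w) = d_w P(w,u)$, while the operator bound $\|R\| \leq 1$ follows from the fact that $R$ is a compression of $P$, whose operator norm on $\ell^2(V,\pi)$ is at most $1$ by reversibility and Cauchy--Schwarz. Using reversibility once more to rewrite $P(v,u) = (d_u/d_v)\, P(u,v)$, and defining $g(u) := P(u,v) = \mathbf{1}_{u \sim v}/d_u$, one has $g \in \ell^2(V \setminus \{v\}, \pi)$ since $\|g\|_\pi^2 = \sum_{u \sim v} 1/d_u < \infty$, and a short calculation yields
$$\P_v(\tau_v = t) \;=\; \frac{1}{d_v}\, \langle g,\, R^{t-2} g \rangle_\pi.$$

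Finally, I would invoke the spectral theorem: since $\sigma(R) \subseteq [-1,1]$, there is a projection-valued measure $E$ on $[-1,1]$ with $R^n = \int_{-1}^1 x^n\, dE(x)$. Setting $\mu(A) := \frac{1}{d_v}\, \langle g,\, E(A) g \rangle_\pi$ produces a finite positive Borel measure on $[-1,1]$---positive because $\langle g, E(A) g \rangle_\pi = \|E(A) g\|_\pi^2 \geq 0$, and of total mass $\|g\|_\pi^2 / d_v < \infty$---satisfying the desired identity $\P_v(\tau_v = t) = \int_{-1}^1 x^{t-2}\, d\mu$. I do not anticipate a serious obstacle here: the matrix-coefficient identity is elementary and the spectral theorem then delivers $\mu$ directly; the only mild care required, relative to the finite-graph case where one could simply diagonalize $R$, is verifying that $R$ acts boundedly on $\ell^2(\pi)$ in the infinite setting, which is immediate from the contractivity of $P$.
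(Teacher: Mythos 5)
Your proof is correct and follows essentially the same approach as the paper: expressing $\P_v(\tau_v=t)$ as $\frac{1}{d_v}\langle h,\Q^{t-2}h\rangle$ for the killed-walk operator $\Q$ on the $d$-weighted $\ell^2$ space and applying the spectral theorem. The only cosmetic differences are that you peel off the first and last steps while the paper splits the path at time $\lceil t/2\rceil$ (and then uses self-adjointness to recombine), and you phrase the spectral theorem via a projection-valued measure rather than a multiplication operator; these are equivalent routes to the same scalar spectral measure.
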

\begin{pf} By conditioning on the location of the random walk at time
$\lceil t/2 \rceil$ and using the Markov property, we get that
\[
\P_v(\tau_v = t) = \sum_{u \ne v}
\P_v \bigl(X_{\lceil t/2 \rceil} = u, \tau_v \geq\lceil t/2
\rceil \bigr)\P_u \bigl(\tau_v=\lfloor t/2 \rfloor
\bigr).
\]
Observe that by the reversibility property of the simple random walk,
we have
\[
\P_v \bigl(X_{\lceil t/2 \rceil}=u, \tau_v \geq\lceil t/2
\rceil \bigr)=\frac{d_u }{ d_v} \P_u \bigl(\tau_v=
\lceil t/2 \rceil \bigr)
\]
and hence
%
\begin{equation}
\label{expand} \P_v(\tau_v = t) = \frac1{d_v}
\sum_{u \ne v} d_u \P_u \bigl(
\tau_v=\lceil t/2 \rceil \bigr) \P_u \bigl(
\tau_v=\lfloor t/2 \rfloor \bigr).
\end{equation}

Consider the Hilbert space $\ell_2(G)$ of functions from $V\setminus\{
v\}$ to $\R$ equipped with the inner product
\[
\langle f, g \rangle= \sum_{u \ne v} d_u
f(u) g(u)
\]
and the corresponding norm.
Let $\Q$ be the random walk operator killed upon hitting~$v$. That is,
\[
\Q f (u)=\frac1{d_u} \sum_{w \sim u, w\ne v} f(w).
\]

One can easily check that $\Q^t f(u) = \E_u (f(X_t) {\mathbf1}_{\{\tau
_v >
t\}})$. Define the function $h(w)=\P_w(\tau_v=1)$ [i.e., $h(w)=1/d_w$
if $w \sim v$ and $h(w)=0$ otherwise]. We have that
\[
\Q^{t-1} h (u) = \P_u(\tau_v=t).
\]
Hence, we can rewrite equation (\ref{expand}) as
\[
\P_v(\tau_v = t)= \frac1{d_v} \bigl
\langle\Q^{\lceil t/2 \rceil-1} h, \Q^{\lfloor t/2 \rfloor-1} h \bigr\rangle.
\]

A simple calculation shows that
\[
\langle\Q f, g \rangle= \sum_{u \ne v} \sum
_{w \sim u, w\ne v} f(w) g(u) = \langle f, \Q g \rangle,
\]
that is, $\Q$ is self-adjoint with respect to the inner product of
$\ell_2(G)$. Hence, we may apply the spectral theorem (see~\cite{RUDIN} or
\cite{HA}) and conclude that there exists a measure $\nu$ on a space
$\Omega$ and some $\lambda\in L_2(\nu)$ such that $\Q$ is
isometrically equivalent to multiplication by $\lambda$,
%
\[
\P_v(\tau_v = t) = \frac1{d_v} \bigl
\langle\Q^{\lceil t/2 \rceil-1} h, \Q^{\lfloor t/2 \rfloor-1} h \bigr\rangle=
\frac1{d_v} \int_{\Omega} \lambda^{t-2}(
\omega) \hat{h}^2(\omega) \,d \nu(\omega),
\]
where $\hat{h}$ is the image of $h$ under the isometry. Since $Q$ is
self-adjoint and substochastic, $\lambda$ takes only real values in
$[-1,1]$ (up to $\nu$ null sets). If we define $\mu$ to be the
pull-back measure
\[
\mu(A) = \frac1{d_v} \int_{\lambda^{-1}(A)}
\hat{h}^2(\omega) \,d\nu (\omega)
\]
for any Borel set $A \subset[-1,1]$, then we get that
\[
\P_v(\tau_v = t) = \int_{-1}^{1}
x^{t-2} \,d\mu,
\]
which completes the proof.
\end{pf}

%
\begin{pf*}{Proof of Theorem~\ref{mainthm2}}
We prove the assertion with $C=50$. We assume that $t-1 \geq48 \log(48
d_v)$. Otherwise we have that $t \leq50 \log(50 d_v)$ and then either
$50 t^{-1}\log(d_v t) \geq1$ or $d_v=1$ and $t=1$, and the assertion is
trivial in both cases. Lemma~\ref{spectra} gives that $\P_v(\tau_v = t)
= \int_{[-1,1]} x^{t-2} \,d\mu$ for some finite measure $\mu$. Write
$A\subset[-1,1]$
for the set 
\[
A = \biggl\{ x\dvtx  |x| \geq1-\frac{4\log d_v t }{ t} \biggr\}.
\]
Assume first that $t$ is even. In this case we may bound
%
\begin{equation}
\label{technicallower}\qquad \P_v(\tau_v \ge t) \geq\sum
_{j \geq0} \P_v(\tau= t+2j) = \int
_{[-1,1]} \frac{x^{t-2}}{1-x^2} \,d\mu\geq\int_A
\frac{x^{t-2}}{1-x^2} \,d\mu.
\end{equation}
Thus
%
\begin{equation}
\label{lastbound} \frac{\P_v(\tau_v = t) }{\P_v(\tau_v \ge t)} \leq
\frac{\int_{A^c} x^{t-2} \,d\mu}{\P_v(\tau_v \ge t)} +
\frac{\int_A x^{t-2}
\,d\mu}{\int_A ({x^{t-2}}/({1-x^2})) \,d\mu}.
\end{equation}
If $x \notin A$, then $x^{t-2} \le(1-\frac{4\log(d_v t) }{ t})^{t-2}
\le e^{-2 \log(d_v t)} = (d_v t)^{-2}$ since $t\geq4$. We also have
$\mu([-1,1]) \leq1$ by putting $t=2$ in Lemma~\ref{spectra}. Hence, by
Theorem~\ref{mainthm1} (recall that we proved it with $c=\frac14$), we
get that
\[
\frac{\int_{A^c} x^{t-2} \,d\mu}{\P_v(\tau_v \ge t)} \leq\frac{4 }{ t}.
\]
If $x \in A$, then $x^2 \geq1- 8t^{-1} \log d_v t$, and hence
\[
\frac{\int_A x^{t-2} \,d\mu}{\int_A ({x^{t-2}}/({1-x^2})) \,d\mu} \leq\frac{8
\log d_v t }{ t}.
\]
We put these two in (\ref{lastbound}) and get that $\P_v(\tau_v=t
\mid
\tau_v\geq t)\leq\frac{12 \log d_v t }{ t}$ when $t$ is even. When
$t>1$ is odd (when $t=1$ the assertion is trivial), we first bound
\[
\P_v(\tau_v = t) = \int_{[-1,1]}
x^{t-2} \,d\mu\leq\int_{[-1,1]} x^{t-3} \,d\mu=
\P_v(\tau_v=t-1).
\]
By the assertion for even $t$'s we get that
\[
\P_v(\tau_v = t-1) \leq\frac{12 \log(d_v(t-1)) }{ t-1}
\P_v(\tau_v \geq t-1).
\]
Also, $\P_v(\tau_v \geq t) = \P_v(\tau_v \geq t-1) - \P_v(\tau_v =
t-1)$ so
\[
\P_v(\tau_v \geq t) \geq \biggl( 1 -
\frac{12 \log(d_v(t-1)) }{ t-1} \biggr) \P_v(\tau_v \geq t-1).
\]
Hence
\[
\frac{\P_v(\tau_v = t) }{\P_v(\tau_v \geq t)} \leq \biggl( 1 - \frac{12
\log
(d_v(t-1)) }{ t-1} \biggr)^{-1}
\frac{\P_v(\tau_v=t-1) }{\P_v(\tau_v
\geq t-1)},
\]
whenever $12 \log(d_v(t-1))/(t-1)<1$. Furthermore, whenever $t-1 \geq
48\* \log(48 d_v)$ we have that $12 \log(d_v(t-1))/(t-1)<1/2$ [since for
any $\eps\in(0,e^{-1})$ and $x \geq\eps^{-1} \log\eps^{-1}$ we have
$x^{-1}\log x \leq2\eps$], so
\[
\frac{\P_v(\tau_v = t) }{\P_v(\tau_v \geq t)}\leq\frac{24 \log d_v (t-1)
}{ t-1 } \leq\frac{50 \log d_v t }{ t },
\]
completing our proof.
\end{pf*}

\section{Preliminaries on expanders}

Recall that a family $\{G_n\}$ of $d$-regular graphs on $n$ vertices is
called an \textit{expander} family if there is some constant $\rho<1$ such
that the second largest eigenvalue in absolute value of the transition
matrix $\lambda_2(n)$ satisfies $|\lambda_2(n)| \leq\rho$ for all $n$.
The quantity $1-\rho>0$ is called the \textit{absolute spectral gap} of
the sequence $\{G_n\}$. Note that in particular this implies that $G_n$
is not bipartite, and the simple random walk on it is not periodic. It
is a classical fact (see Theorem 6.9 in~\cite{LP}) that if $\{X_t\}$ is
a simple random walk on $G_n$, then for any $v\in G_n$ and any integer
$t$ we have
%
\begin{equation}
\label{mixing} \biggl|\P(X_t = v) - \frac{1 }{ n} \biggr| \le
e^{-(1-\rho)t}.
\end{equation}
Another useful fact (see~\cite{BK}) is that if we put unit resistance
on each edge of the expander, then there exists a constant $C=C(\rho
)<\infty$ such that for any $u,v \in G_n$ the effective resistance satisfies
%
\begin{equation}
\label{reff} \Reff(u \lra v) \leq C.
\end{equation}

In the following four lemmas we study the simple random walk on the
graph $G$ obtained by taking a $d$-regular expander and an arbitrary
vertex $v$ and adding a new vertex $v'$ together with the edge $\{v',v\}
$. We consider $d$ as fixed and $|G|=n$ tending to infinity (in all our
applications taking $d=3$ suffices). All the constants in the following
lemmas depend on $\rho$ but not on~$n$.
%
\begin{lemma}\label{expander} There exists a constant $\delta=\delta
(\rho)>0$ such that for any \mbox{$u\neq v'$}
\[
\P_u ( \tau_{v'} \geq\delta n ) \geq\delta
\]
and
\[
\P_u ( \tau_{v'} \leq n ) \geq\delta.
\]
\end{lemma}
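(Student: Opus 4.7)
My plan is to extract both inequalities from estimates of the first two moments of $\tau_{v'}$ (and of the number of visits to $v'$), using Cauchy--Schwarz together with the mixing and resistance estimates (\ref{mixing})--(\ref{reff}) on the expander.

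I would begin with the explicit computation of $E_v \tau_{v'}$. Since $v'$ is a pendant attached at $v$, a one-step analysis at $v$ yields the recursion
\[
E_v \tau_{v'} \;=\; \tfrac{1}{d+1} + \tfrac{d}{d+1}\bigl(1 + \bar E \tau_v + E_v \tau_{v'}\bigr),
\]
where $\bar E \tau_v$ denotes $E_W \tau_v$ averaged over the $d$ expander-neighbors $W$ of $v$. The identity $E_v \tau_v^+ = 2|E|/d_v$ applied to the $d$-regular expander forces $\bar E \tau_v = \Theta(n)$, and solving yields $E_v \tau_{v'} \ge dn/2$ for $n$ large. Since any walk started from $u \notin \{v, v'\}$ must visit $v$ before $v'$, the strong Markov property gives a decomposition $\tau_{v'} = \tau_v + \tilde\tau_{v'}$ with $\tilde\tau_{v'}$ an independent copy of $\tau_{v'}$ under $\P_v$; in particular $E_u \tau_{v'} \ge E_v \tau_{v'} \ge dn/2$ for every $u \ne v'$.

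For the second inequality I would apply Cauchy--Schwarz to $N := \#\{1 \le t \le n : X_t = v'\}$, using $\P_u(\tau_{v'} \le n) = \P_u(N \ge 1) \ge (E_u N)^2/E_u N^2$. The mixing bound (\ref{mixing}), transferred from the expander to the perturbed graph $G$ (adding a single degree-one pendant shrinks the spectral gap by at most a constant factor, by a standard Dirichlet form comparison), yields $\P_u(X_t = v') \ge \tfrac12 \pi_G(v')$ for $t \ge T_0 = O(\log n)$. Summing over $t \le n$ and using $\pi_G(v') = \Theta(1/n)$ produces $E_u N \ge c_1 > 0$. For the second moment, the Markov property gives $E_u N^2 \le E_u N \cdot \bigl(1 + 2\, E_{v'}[\#\{0 \le \ell \le n : X_\ell = v'\}]\bigr)$, whose bracketed factor is $O(1)$ by mixing and $n \pi_G(v') = O(1)$. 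Combining these, $\P_u(\tau_{v'} \le n) \ge \delta$ for a universal $\delta$.

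For the first inequality I would then bootstrap. Iterating the second inequality via the strong Markov property produces the exponential tail $\P_u(\tau_{v'} > kn) \le (1-\delta)^k$, hence $E_u \tau_{v'}^2 \le C n^2$ for some $C$. Combined with $E_u \tau_{v'} \ge dn/2$, the Cauchy--Schwarz splitting
\[
E_u \tau_{v'} \;\le\; \delta n + \sqrt{E_u \tau_{v'}^2}\cdot\sqrt{\P_u(\tau_{v'} \ge \delta n)}
\]
yields $\P_u(\tau_{v'} \ge \delta n) \ge (d/2 - \delta)^2/C$, so choosing $\delta$ sufficiently small completes the argument. The main technical obstacle I expect is verifying that the mixing estimate (\ref{mixing}) extends from the expander to $G$ with constants depending only on $d$ and $\rho$; once this perturbation lemma is in hand, the remaining steps are essentially routine.
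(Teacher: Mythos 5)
Your proof of the second inequality, $\P_u(\tau_{v'}\le n)\ge \delta$, is essentially the paper's: a Paley--Zygmund second-moment bound on the number of visits to $v'$ up to time $n$, driven by the mixing estimate~(\ref{mixing}). Both you and the paper implicitly need the transfer of that estimate from the $d$-regular expander to the pendant-augmented graph $G$; you flag this honestly, and it is indeed a routine Dirichlet-form or coupling comparison (the excursions to $v'$ form a geometric dilation at $v$ of an expander walk, contributing only $O(1/n)$ laziness).

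For the first inequality you take a genuinely different route, and it works. The paper argues geometrically: since $G$ has bounded degree there is a vertex $y$ at distance $\Omega(\log n)$ from $v$, the effective-resistance bound~(\ref{reff}) gives a constant chance of hitting $y$ before returning to $v$, and then~(\ref{mixing}) plus a union bound shows the walk is unlikely to revisit $v$ (and hence $v'$) before time $\delta n$. You instead pin down $\E_v\tau_{v'}=dn+O(1)$ exactly (your one-step recursion together with $\E_v\tau_v^+ = n$ on the expander matches the commute-time identity $\E_v\tau_{v'}+\E_{v'}\tau_v = 2|E(G)|\Reff(v\lra v')$), observe $\E_u\tau_{v'}\ge\E_v\tau_{v'}$ by the forced passage through $v$, bootstrap the already-proved second inequality into the geometric tail $\P_u(\tau_{v'}>kn)\le(1-\delta)^k$ and hence $\E_u\tau_{v'}^2 = O(n^2)$, and close with the Cauchy--Schwarz split $\E\tau_{v'}\le\delta n + (\E\tau_{v'}^2)^{1/2}\P(\tau_{v'}\ge\delta n)^{1/2}$. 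Your version makes the two inequalities logically dependent (the first is deduced from the second) whereas the paper proves them independently, but that causes no circularity. The trade-off: the paper's argument uses the resistance bound~(\ref{reff}) and a diameter lower bound, while yours replaces those by the commute-time identity and a reverse second-moment argument. Both are sound; your route has the aesthetic advantage that the constant $dn$ in $\E_v\tau_{v'}$ makes the order $\delta n$ of the tail transparent.
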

\begin{pf} We begin by proving a lower bound on $\prob(\tau_{v'} \geq
\delta n)$. Since the walker must visit $v$ in order to visit $v'$, it
suffices to prove the assertion for $u=v$. Since $G$ has bounded
degree,\vadjust{\goodbreak} there exists a vertex $y\in G$ with graph distance from $v$ at
least $c \log n$. By (\ref{reff}) the effective resistance between $v$
and $y$ is bounded by a constant and hence with at least constant
positive probability $X_t$ hits $y$ before $v$. We deduce that for some
constant $c>0$ we have
%
\begin{equation}
\label{expanderfirst} \P( \tau_{v'} \geq c \log n ) \geq c.
\end{equation}
Furthermore, by (\ref{mixing}) and the union bound we have that
\[
\P \bigl( \exists t \in[c\log n, \delta n] \mbox{ with } X_t = v
\bigr) \leq\delta+ \frac{e^{-(1-\rho) c \log n} }{1-e^{-(1-\rho)}},
\]
where $\rho<1$ is the uniform bound on the second eigenvalue. This
together with (\ref{expanderfirst}) shows that $\prob(\tau_{v'} \geq
\delta n) \geq\delta$ for some constant $\delta>0$.

To prove a lower bound on $\prob(\tau_{v'} \leq n )$ we employ a
second moment calculation. Write $Y$ for the number of visits to $v'$
before time $n$. It is clear by (\ref{mixing}) that $\prob(X_t = v')
\geq\frac{1 }{2n}$ for any $t \geq C\log n$ so $\E Y \geq c$ for some
$c>0$. On the other hand, if $t_2 > t_1$ and $X_{t_1}=v'$, then by
(\ref
{mixing}) the probability of having $X_{t_2}=v'$ is at most $n^{-1} +
e^{-c(t_2-t_1)}$ for some $c>0$. This gives that $\E Y^2 \leq C$, and
we get that $Y>0$ with some fixed probability by the inequality
\[
\prob(\mathbf{X} > 0) \geq\frac{ (\E\mathbf{X})^2 }{\E\mathbf
{X}^2 },
\]
valid for any nonnegative random variable $\mathbf{X}$. This completes
the proof.
\end{pf}
%
\begin{lemma} \label{mixingstopping} There exist constants $C=C(\rho
)>0$ and $c=c(\rho)>0$ such that for any vertex $u \neq v'$ there
exists a set of vertices $S_u$ such that $|S_u|=n-o(n)$ and for any
$w\in S_u$ and any $C\log n \le t \le n$
\[
\P_u(X_t=w, \tau_{v'} \geq t) \ge
\frac{c}{n}.
\]
\end{lemma}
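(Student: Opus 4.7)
The plan is to use the strong Markov property at $\tau_{v'}$ to write
\[
\P_u(X_t = w, \tau_{v'} \geq t) = \P_u(X_t = w) - \sum_{s=1}^{t-1} \P_u(\tau_{v'} = s)\, \P_{v'}(X_{t-s} = w),
\]
lower-bound the first term by mixing, and upper-bound the correction. First, iterating Lemma \ref{expander} via the strong Markov property at times $\delta n, 2\delta n, \ldots$ gives $\P_u(\tau_{v'} \geq n) \geq \delta^{\lceil 1/\delta \rceil} =: c_0 > 0$ for every $u \neq v'$. Next, applying \eqref{mixing} to $G$ (which differs from a $d$-regular expander only by the pendant $v'$) gives, for a sufficiently large constant $C$ and every $t \geq C\log n$, the bounds $\P_u(X_t = w) \geq (1-\eps)/n$ for every $w \neq v'$ and $\P_{v'}(X_r = w) \leq (1+\eps)/n$ for every $r \geq C\log n$ and every $w$.

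I split the correction sum at $s^{\ast} = t - C\log n$. The \emph{early} part $s \leq s^{\ast}$ uses $\P_{v'}(X_{t-s} = w) \leq (1+\eps)/n$ and is bounded by $(1+\eps)/n \cdot \P_u(\tau_{v'} \leq s^{\ast}) \leq (1+\eps)(1-c_0)/n$, which combined with the main term still leaves a surplus of order $c_0/n$. The \emph{late} part $s > s^{\ast}$ is more delicate, since $\P_{v'}(X_{t-s} = w)$ can be as large as $1$ when $t-s$ is small. Here I define
\[
S_u := \bigl\{\, w \in V \setminus \{v'\} : \P_{v'}(X_r = w) \leq K/n \text{ for every } 1 \leq r \leq C\log n \,\bigr\}
\]
with $K = \lceil (\log n)^2 \rceil$. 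Since $\sum_w \P_{v'}(X_r = w) = 1$, Markov's inequality gives $|\{w : \P_{v'}(X_r = w) > K/n\}| \leq n/K$ for each $r$, so by a union bound $|V \setminus S_u| \leq C(\log n)\cdot n/K + 1 = o(n)$.

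For $w \in S_u$ the late contribution is at most $(K/n) \sum_{s > s^{\ast}} \P_u(\tau_{v'} = s)$. Using the identity $\P_u(\tau_{v'} = s) = \tfrac{1}{d+1}\, \P_u(X_{s-1} = v, \tau_{v'} \geq s)$, which holds because $v$ is the only neighbor of $v'$, together with mixing applied to $X_{s-1}$, one obtains $\P_u(\tau_{v'} = s) = O(1/n)$ for every $s \geq C\log n + 1$. Summing over an interval of length $C\log n$ yields $\sum_{s > s^{\ast}} \P_u(\tau_{v'} = s) = O((\log n)/n)$, and hence the late contribution is $O(K \log n / n^2) = O((\log n)^3 / n^2) = o(1/n)$. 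Putting this together with the early bound gives $\P_u(X_t = w, \tau_{v'} \geq t) \geq c_0/(2n)$ for all $n$ sufficiently large, which is the desired bound.

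The main obstacle is the appeal to the mixing estimate \eqref{mixing} for the modified graph $G$, since \eqref{mixing} as stated is only formulated for $d$-regular expanders. I expect this to follow either from a direct coupling with the simple random walk on the original expander (the two walks decouple only upon visits to $v'$, which are rare on the $\log n$ timescale), or from a Weyl-type perturbation argument showing that attaching one pendant vertex shifts the second eigenvalue of the transition matrix by at most $O(1/n)$, which is negligible for our purposes.
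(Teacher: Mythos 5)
Your proof is correct in outline but takes a genuinely different route from the paper's. You decompose $\P_u(X_t = w, \tau_{v'} \geq t) = \P_u(X_t = w) - \sum_{s<t}\P_u(\tau_{v'}=s)\P_{v'}(X_{t-s}=w)$, lower-bound the main term by mixing, and estimate the correction sum in two pieces, isolating the ``late'' contributions via a good set $S_u$ chosen so that $\P_{v'}(X_r=\cdot)$ is never too concentrated. The paper instead establishes the \emph{upper} bound $\P_u(X_t = w \mid \tau_{v'}\geq t) \leq C/n$ uniformly in $w$, propagates through a good set $S$ of vertices from which hitting $v'$ in $C\log n$ steps is unlikely, and then extracts the lower bound for most $w$ by a pigeonhole / mass-conservation argument: since the conditional distribution has total mass one and puts at most $(1+o(1))/n$ on each vertex, it must put at least $(1-o(1))/n$ on all but $o(n)$ of them. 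Your set $S_u$ (small $\P_{v'}(X_r = w)$ for $r \leq C\log n$) and the paper's set $S$ (small $\P_w(\tau_{v'} \leq C\log n)$) are reversibility-duals of each other, which is why both constructions discard only $o(n)$ vertices. Your approach is somewhat more explicit and self-contained; the paper's is shorter once the upper bound is in hand.

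Two small points to tighten. First, the bound $\P_{v'}(X_r = w) \leq (1+\eps)/n$ with $\eps$ as small as your surplus computation requires (roughly $\eps < c_0/2$) fails at $w = v$, whose stationary mass is $(d+1)/(nd+2) \approx (1 + 1/d)/n$: for fixed $d$ this forces $\eps \geq 1/d$, which may exceed $c_0/2$. The fix is trivial --- simply exclude $v$ as well as $v'$ from $S_u$; you are already discarding $o(n)$ vertices, and for every remaining $w$ one has $\pi(w) = d/(nd+2) < 1/n$, so the mixing bound holds with $\eps = O(1/n)$. Second, your concern about applying \eqref{mixing} to the graph with the pendant vertex $v'$ is legitimate, but it is not a gap specific to your argument: the paper's own proofs of Lemmas \ref{expander} and \ref{mixingstopping} invoke \eqref{mixing} for the modified graph in exactly the same implicit way. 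Either of your sketched repairs works; the cleanest is to observe that attaching a single pendant edge changes the isoperimetric constant (hence, via Cheeger, the spectral gap) by only $O(1/n)$, so \eqref{mixing} holds on the modified graph with $\pi(\cdot)$ in place of $1/n$ and a slightly worse constant $\rho$.
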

\begin{pf} For any two vertices $u,w$ and any $C \log n \leq t \leq n$,
we have that $\P_u(X_t=w)\le2/n$ by (\ref{mixing}) and $\P_u(\tau_{v'}\ge t) \ge\P_u(\tau_{v'} \ge n) \ge\delta^{\lceil\delta^{-1}
\rceil} >0$ by iterating Lemma~\ref{expander}. Hence
%
\begin{equation}
\label{outofexpander1}\P_u(X_t=w \mid
\tau_{v'}\ge t) \le\frac
{C}{n}
\end{equation}
for some $C=C(\rho)>0$. Furthermore, $\P_\pi(\tau_{v'}\le C \log n) =
O(n^{-1} \log n)$, where $\pi$ is the stationary distribution. This is
because the expected number of visits to $v'$ by time $C\log n$ is
$O(n^{-1}\log n)$. Define
\[
S= \bigl\{ u\dvtx  \P_u(\tau_{v'}\le C \log n) \le C
n^{-1} \log^2 n \bigr\},
\]
and we deduce that $|S| \ge n(1- \log^{-1} n)$. We combine this with
(\ref{outofexpander1}) to get that
%
\begin{equation}
\label{outofexpander2}\P_u(X_t \in S \mid
\tau_{v'}\ge t) \ge1- \frac{C
}{\log n}.
\end{equation}
By the definition of $S$ and (\ref{mixing}), for any $u \in S$ and any
$w$, we have
\[
\P_u(X_{C \log n}=w \mid\tau_{v'}\ge C \log n) \le
\frac{1+o(1)}{n}.
\]
Thus, by the Markov property, for any $u$ and $w$,
\[
\P_u(X_{t+C \log n}=w \mid X_t \in S,
\tau_{v'}\geq t+ C\log n) \le\frac{1+o(1)}{n}
\]
and therefore there exists a set $S_u$ such that $|S_u|=n-o(n)$ such
that for every $w \in S_u$, we have
\[
\P_u(X_{t+C \log n}=w \mid X_t \in S,
\tau_{v'}\geq t+ C\log n) \ge\frac1n\bigl(1-o(1)\bigr).
\]
This together with (\ref{outofexpander2}) shows that for any $w\in S_u$
and $2 C \log n \leq t \leq n$, we have
%
\begin{equation}
\label{outofexpander3}\P_u(X_t=w, \tau_{v'}>
t) \ge\frac
{c}{n},
\end{equation}
completing our proof.
\end{pf}
%
\begin{lemma}\label{outofexpander} There exist constants $C=C(\rho)>0$
and $c=c(\rho)>0$, such that for every $C \log n \leq t \leq n$ and any
$u\neq v'$, we have
\[
\P_u( \tau_{v'} = t ) \ge\frac{c}{n}.
\]
\end{lemma}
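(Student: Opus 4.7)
The plan is to combine a time-reversal identity with Lemma~\ref{mixingstopping} applied to the \emph{starting} vertex $v$. Since $v'$ has degree one with only neighbor $v$, reversibility of simple random walk (together with the fact that the stationary measure is proportional to degree) gives the identity
\[
d_u\,\P_u(\tau_{v'}=t) \;=\; \P_v\bigl(X_{t-1}=u,\ \tau_{v'}\geq t-1\bigr)
\]
for every $u\neq v'$ and $t\geq 2$. Indeed, a length-$t$ path from $u$ first hitting $v'$ at time $t$ reverses to a path out of $v'$ whose compulsory first step lands at $v$, after which it avoids $v'$ for $t-2$ more steps before ending at $u$; balancing $\pi(u)=d_u/(2|E|)$ against $\pi(v')=1/(2|E|)$ yields the displayed equality.

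With this identity in hand, applying Lemma~\ref{mixingstopping} with starting vertex $v$ supplies a set $S_v\subset V$ with $|S_v|=n-o(n)$ such that $\P_v(X_{t-1}=u,\tau_{v'}\geq t-1)\geq c/n$ whenever $u\in S_v$ and $t-1$ lies in the admissible range. Combined with the identity and the bounded-degree bound $d_u\leq d+1$, this already gives $\P_u(\tau_{v'}=t)\geq c'/n$ for every $u\in S_v$.

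For the exceptional set $u\notin S_v$ (of size $o(n)$), fix $s=C'\log n$ with $C'$ large. The mixing estimate~(\ref{mixing}) yields $\P_u(X_s\notin S_v)\leq |S_v^c|\cdot O(1/n)=o(1)$, while Lemma~\ref{expander} yields $\P_u(\tau_{v'}>s)\geq\delta$, so $\P_u(X_s\in S_v,\tau_{v'}>s)\geq\delta-o(1)$. Splitting at time $s$ via the Markov property and using the already-established bound for each $w\in S_v$ at time $t-s$,
\[
\P_u(\tau_{v'}=t) \;\geq\; \sum_{w\in S_v}\P_u(X_s=w,\tau_{v'}>s)\,\P_w(\tau_{v'}=t-s) \;\geq\; \frac{c'}{n}\,\P_u\bigl(X_s\in S_v,\tau_{v'}>s\bigr) \;\geq\; \frac{c''}{n},
\]
provided the constant $C$ in the statement is taken large enough that both $t-1$ and $t-s$ lie in the interval $[C\log n,n]$ required by Lemma~\ref{mixingstopping}.

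The main anticipated obstacle is the careful formulation of the time-reversal identity (paying attention to the boundary terms and to the fact that reversal converts a ``specific start, varying end'' event into a ``varying start, specific end'' event, which is exactly the direction of information provided by Lemma~\ref{mixingstopping}). The remaining bookkeeping of the range of $t$ and the reduction of $u\notin S_v$ to $u\in S_v$ via mixing and Lemma~\ref{expander} is routine once the identity is in place.
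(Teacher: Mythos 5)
Your proof is correct, but it takes a genuinely different route from the paper's. The paper decomposes the event $\{\tau_{v'}=t\}$ at the \emph{midpoint} $\lceil t/2\rceil$, applies reversibility to the second half, and then uses Lemma~\ref{mixingstopping} \emph{twice} — once for the forward walk from $u$ and once (after a one-step reduction from $v'$ to $v$) for the reversed walk — summing the product $\P_u(X_{\lceil t/2\rceil}=w,\tau_{v'}\geq\lceil t/2\rceil)\,\P_{v'}(X_{\lfloor t/2\rfloor}=w,\tau_{v'}\geq\lfloor t/2\rfloor)$ over the intersection $S_u\cap S_v$, which has size $n-o(n)$. That symmetric split yields the bound for every $u\neq v'$ in one stroke, with no exceptional set to handle.

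You instead reverse the \emph{whole} path at once, obtaining the exact identity $d_u\,\P_u(\tau_{v'}=t)=\P_v(X_{t-1}=u,\tau_{v'}\geq t-1)$, and apply Lemma~\ref{mixingstopping} only once, from $v$. This is cleaner where it applies, but the lemma only guarantees the pointwise lower bound for $u$ in the set $S_v$, so you are forced to add a separate mixing-plus-Markov bootstrap (wait $C'\log n$ steps, land in $S_v$ without hitting $v'$, then invoke the $S_v$ case at time $t-s$) to cover the $o(n)$ exceptional vertices. Both arguments are sound and rest on the same two ingredients (reversibility of the killed walk and Lemma~\ref{mixingstopping}); the paper's midpoint split trades the extra bootstrap for a double application of the lemma and an intersection bound. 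Your reduction $\P_{v'}(\cdot)=\P_v(\cdot)$ via the compulsory first step, and the observation that $\{\tau_{v'}\geq t-1\}$ and $\{\tau_{v'}\geq t\}$ coincide on $\{X_{t-1}=u\}$ with $u\neq v'$, are both handled correctly; just make sure, as you note, to enlarge the lemma's constant $C$ so that $t-1$, $t-s$, and $t-s-1$ all stay inside the window $[C_{\ref{mixingstopping}}\log n,\,n]$.
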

\begin{pf}
Reversibility of the simple random walk implies that
\begin{eqnarray*}
\P_u(\tau_{v'}=t)
&\geq&
\frac{1 }{ d+1} \sum
_{w\neq v'} \P_u\bigl(X_{\lceil t/2
\rceil}=w,
\tau_{v'} \geq\lceil t/2 \rceil\bigr)\\
&&\hspace*{47.5pt}{}\times \P_{v'}
\bigl(X_{\lfloor t/2
\rfloor}=w, \tau_{v'} \geq\lfloor t/2 \rfloor\bigr),
\end{eqnarray*}
since the maximum degree in $G$ is $d+1$. The assertion now follows
from plugging in Lemma~\ref{mixingstopping} and summing.
\end{pf}

Our last lemma about expanders concerns two independent simple random
walks $X_t$ and $Y_t$. We denote by $\P_{u_1,u_2}$ for the probability
distribution generated when $X_0=u_1$ and $Y_0=u_2$. We denote $\tau_u^X$ for the hitting time of $X_t$ of $u$ and similarly for $Y$.
%
\begin{lemma} \label{collisioninsideexpander} There exists a constant
$c=c(\rho)>0$ such that for any $u_1\neq v'$ and $u_2 \neq v'$,
\[
\P_{u_1,u_2} \bigl( \exists t \le n \wedge\tau^X_{v'}
\wedge\tau^Y_{v'} \mbox{ such that } X_t =
Y_t \bigr) \geq c.
\]
In other words, the probability that $X_t$ and $Y_t$ collide before
time $n$ and before either of them hits $v'$ is uniformly positive.
\end{lemma}
\begin{pf} For any $C\log n \le t \le n$, by Lemma~\ref{mixingstopping}
there exists a constant $c>0$ and a set $S$ of size $|S|=n-o(n)$ such
that for any $w \in S$,
\[
\P_{u_1}\bigl(X_t = w, \tau_{v'}^X
\geq t\bigr) \geq\frac{c }{ n},\qquad \P_{u_2}\bigl(Y_t
= w, \tau_{v'}^Y \geq t\bigr) \geq\frac{c }{ n}.
\]
Hence
\[
\sum_{w \in G} \P_{u_1} \bigl(X_t
= w, \tau_{v'}^X \geq t \bigr) \P_{u_2}
\bigl(Y_t = w, \tau_{v'}^Y \geq t \bigr) \geq
\frac{c
}{ n}.
\]
Let $N = |\{ t \le n \wedge\tau^X_{v'} \wedge\tau^Y_{v'}\dvtx  X_t =
Y_t\}
|$; then by the previous inequality and the independence of $X_t$ and
$Y_t$ we learn that $\E N \geq c$. To bound the second moment of $N$ by
(\ref{mixing}) we have $\prob_v(X_t = u) \leq n^{-1} + e^{-ct}$, for
some $c>0$. We deduce by the Markov property that for any $t_2 > t_1$,
we have that
\[
\P_{u_1,u_2} ( X_{t_1} = Y_{t_1} \mbox{ and }
X_{t_2}=Y_{t_2} ) \leq \bigl(n^{-1} +
e^{-c(t_2-t_1)}\bigr) \prob_{u_1,u_2} ( X_{t_1} =
Y_{t_1}).
\]
Similar considerations give that $\P_{u_1,u_2} ( X_{t_1} = Y_{t_1})
\leq n^{-1} + e^{-ct_1}$, and so we have that
\[
\E N^2 \le\sum_{t_1=1}^n \sum
_{t_2=t_1}^n \P_{u_1,u_2} (
X_{t_1} = Y_{t_1} \mbox{ and } X_{t_2}=Y_{t_2}
) \leq C
\]
for some constant $C>0$ and the assertion of the lemma follows.
\end{pf}

\section{Sharpness}\label{secsharp}

In this section we show that the estimate of Theorem~\ref{mainthm2} is
sharp up to the multiplicative constant $C$. In order to elucidate the
ideas of the construction we begin with a simple construction showing
the sharpness of Theorem~\ref{mainthm2} for a single $t$. We then
construct a more elaborate graph for which the theorem is sharp for an
infinite sequence of $t$'s. This graph will be useful later in Section
\ref{combssec}---it will be the base of the comb for the construction
of Theorem~\ref{nocollisionthm}.

\subsection{A simple construction} Given an integer $t$ we construct
the graph $G_t$ as follows. Let $\{E_i\}_{i \geq1}$ be a sequence of
disjoint $3$-regular expanders with spectral gap $1-\rho>0$ and
$|E_i|=4i$. Let $\delta= \delta(\rho)>0$ be the constant from Lemma
\ref{expander}, and take
\[
n = \frac{3 \log(1/\delta) t}{\delta\log t}.
\]
The graph $G_t$ is constructed by taking $\N=\{0,1,\ldots\}$ with edges
between consecutive numbers, and attaching to $0$, by an edge, the
graph $E_n$ (the degree of $0$ is thus~$2$).
%
\begin{theorem}\label{sharp} There exists a constant $c=c(\rho)>0$ such
that the simple random walk on $G_t$ satisfies
%
\begin{equation}
\label{sharpineq} \P_0 ( \tau_0 = t \mid
\tau_0 \geq t ) \geq \frac{c \log t }{ t}.\vadjust{\goodbreak}
\end{equation}
\end{theorem}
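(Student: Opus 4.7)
The vertex $0$ has degree $2$ in $G_t$, with neighbors $1\in\N$ and a single vertex $v$ of the expander; let $G$ denote the expander together with the extra vertex $v'$ attached to $v$, as in Lemmas~\ref{expander}--\ref{outofexpander}, so that $v'$ corresponds to $0$ as a vertex of $G_t$. A walker in $G_t$ cannot pass between $\N$ and the expander without visiting $0$, hence a first-step analysis at time~$1$ gives
$$\P_0(\tau_0 = t) = \tfrac12\bigl[\P_1^{\N}(\tau_0=t-1) + \P_v^{G}(\tau_{v'}=t-1)\bigr],$$
together with the analogous identity for $\P_0(\tau_0\ge t)$.

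The plan is to establish, for all sufficiently large $t$, two inequalities: \textbf{(a)} $\P_v^{G}(\tau_{v'}=t-1) \ge (c/n)\,\P_v^{G}(\tau_{v'}\ge t-1)$, and \textbf{(b)} $\P_v^{G}(\tau_{v'}\ge t-1) \ge c'\,\P_1^{\N}(\tau_0\ge t-1)$. Combined with the first-step identities these give
$$\frac{\P_0(\tau_0=t)}{\P_0(\tau_0\ge t)} \;\ge\; \frac{\tfrac12\,\P_v^{G}(\tau_{v'}=t-1)}{\P_v^{G}(\tau_{v'}\ge t-1)+\P_1^{\N}(\tau_0\ge t-1)} \;\ge\; \frac{c''}{n} \;\asymp\; \frac{\log t}{t},$$
as required.

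For (a), set $T=t-1$ and choose a constant $K$ large enough that the time $K\log n$ falls in the range of Lemma~\ref{outofexpander}, so that $\P_w^{G}(\tau_{v'}=K\log n)\ge c/n$ uniformly in $w\ne v'$. Applying the strong Markov property at time $T-K\log n$ and summing over the value of $X_{T-K\log n}$ yields
$$\P_v^{G}(\tau_{v'}=T) \;\ge\; \frac{c}{n}\,\P_v^{G}\bigl(\tau_{v'}>T-K\log n\bigr) \;\ge\; \frac{c}{n}\,\P_v^{G}(\tau_{v'}\ge T).$$
For (b), iterate Lemma~\ref{expander} by the Markov property at times $\delta n,2\delta n,\ldots$, using the short-scale upper bound $\P_u^{G}(\tau_{v'}=\delta n)=O(1/n)$ (an immediate consequence of $\P_u(X_{\delta n-1}=v)=O(1/n)$ from the mixing estimate (\ref{mixing}) and $v$ being the unique neighbor of $v'$) to justify passing from $\ge$ to $>$ at each step. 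The result is $\inf_{u\ne v'}\P_u^{G}(\tau_{v'}\ge k\delta n)\ge(\delta/2)^{k}$ for every positive integer $k$. Substituting $k=\lceil(t-1)/(\delta n)\rceil\le\log t/(3\log(1/\delta))+1$, which is the calibration built into the definition of $n$, gives $\P_v^{G}(\tau_{v'}\ge t-1)\ge c_1 t^{-\alpha}$ for some $\alpha<1/2$, provided the universal constant $\delta$ is taken smaller than $1/4$ (allowable, since Lemma~\ref{expander} remains valid for any smaller value of $\delta$). Combined with the classical half-line estimate $\P_1^{\N}(\tau_0\ge t-1)\le C/\sqrt{t}$, this gives (b).

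The main obstacle is step (a): Lemma~\ref{outofexpander} only produces the density lower bound $c/n$ on the first-passage distribution for times $s\le n$, whereas the target time $t-1$ exceeds $n$ by a logarithmic factor. The ``ladder'' idea of invoking the lemma on a short window of length $K\log n$ at the end of $[0,T]$ transports this short-scale density bound to the long-scale tail, and is the heart of the argument. Values of $t$ below a fixed universal threshold are handled trivially by shrinking the constant $c$.
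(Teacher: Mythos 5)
Your proposal is correct and follows essentially the same route as the paper's proof: your first-step decomposition at the origin into the $\N$-branch and the expander-branch is exactly the paper's conditioning on $\A=\{X_1=1\}$ versus $\A^c$; your step (b) is the paper's iterated application of Lemma~\ref{expander} giving $\P(\tau\geq t\mid\A^c)\geq\delta^{t/\delta n}\gg t^{-1/2}$ (and you usefully make explicit the $\geq$\nobreakdash-versus\nobreakdash-$>$ bookkeeping in the iteration, which the paper glosses over); and your step (a), applying the strong Markov property at time $T-K\log n$ and invoking Lemma~\ref{outofexpander} on the short final window, is the paper's estimate $\P(\tau=t\mid\A^c)\geq\frac{c}{n}\P(\tau\geq t-\delta n\mid\A^c)$ with the window $\delta n$ replaced by $K\log n$. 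The proof is sound.
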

\begin{pf} We abbreviate $\tau$ for $\tau_0$ and write $\{X_t\}$ for
the simple random walk on $G_t$ starting at $0$. Write $\A$ for the
event that $X_1=1$, so $\P(\A) = 1/2$. It is a well-known fact (see
\cite{Feller2}) that the probability that a random walk on $\N$ does
not return to the origin in $t$ steps decays like $t^{-1/2}$, that is,
$\P(\tau\geq t \mid\A) \approx t^{-1/2}$. By\vspace*{1pt} iterating
Lemma~\ref{expander} using the Markov property we get that $\P( \tau\geq t \mid
\A^c ) \geq\delta^{{t / \delta n}} \geq t^{-1/2}$ by the definition of
$n$, so
\[
\P(\tau\geq t) \approx\P\bigl( \tau\geq t \mid\A^c \bigr).
\]
Now, we condition on the first $t-\delta n$ steps walk and apply Lemma
\ref{outofexpander} together with the Markov property to get that
\[
\prob\bigl(\tau= t \mid\A^c\bigr) \geq\frac{c }{ n} \prob\bigl(
\tau\geq t - \delta n \mid\A^c\bigr),
\]
hence
\[
\prob\bigl(\tau= t \mid\A^c\bigr) \geq\frac{c }{ n} \prob\bigl(
\tau\geq t \mid\A^c\bigr) \geq\frac{c \log t }{ t} \prob(\tau\geq t),
\]
completing our proof.
\end{pf}

\subsection{The full construction}\label{basegraph} We now construct a
graph saturating inequality (\ref{sharpineq}) for infinitely many
$t$'s. This graph will also be used in the next section as the base
graph (a \textit{tooth}) of the comb exhibiting almost sure infinitely
many collisions. Let $\{E_i\}_{i \geq1}$ be a sequence of disjoint
$3$-regular expanders with spectral gap $1-\rho>0$ and of sizes
$|E_i|=n_i$. Let $\{h_i, n_i\}_{i\geq0}$ be two increasing sequences of
positive integers with $h_1 \geq2C$ where $C=C(\rho)$ is the constant
from (\ref{reff}) and such that
%
\begin{equation}
\label{seqcond} h_i \gg n_{i-1} h_{i-1}^2
\quad\mbox{and}\quad n_i = h_i^{15},
\end{equation}
where $a_i \gg b_i$ means $b_i/a_i \to0$ as $i \to\infty$. For each
$i$ let $v_i \in E_i$ be an arbitrary vertex.
The graph $G=G(\{h_i,n_i\})$ consists of $\N=\{0,1,\ldots\}$ with edges
between consecutive numbers, and we attach the expander $E_i$ by adding
an edge between $v_i$ and $h_i \in\N$.

All the constants in this section will depend on $\rho$ but not on $i$
or $\{h_i,n_i\}$. The following is the main result of this section.
%
\begin{theorem} \label{sharpness} Consider the graph $G(\{h_i,n_i\})$
for $\{h_i,n_i\}$ satisfying (\ref{seqcond}). There exists a constant
$c=c(\rho)>0$ such that for $t_i = ch_in_i \log n_i$ we have
\[
\P_0 ( \tau_0 = t_i \mid
\tau_0 \geq t_i ) \geq\frac{c \log t_i
}{
t_i}.
\]
\end{theorem}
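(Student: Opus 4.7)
The plan is to mirror the argument of Theorem~\ref{sharp}, with $E_i$ and the distance $h_i$ playing the role of the single expander attached at $0$. At time scale $t_i = ch_in_i\log n_i$ the walk's behaviour is dominated by $E_i$: by the gap condition $h_{i+1}\gg n_ih_i^2$ the walk cannot reach $v_{i+1}$ within time $t_i$ (since $t_i\ll h_{i+1}^2$), while iterating Lemma~\ref{expander} inside any smaller expander $E_j$ ($j<i$) bounds the in-expander survival for time $t_i$ by $\delta^{t_i/(\delta n_{i-1})}$, which is super-polynomially small in $n_i$ thanks to $h_i\gg n_{i-1}h_{i-1}^2$. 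Therefore the event $\{\tau_0\geq t_i\}$ is carried, up to negligible corrections, by trajectories that enter $E_i$ before returning to $0$ and spend most of their time bouncing in and out of $E_i$.

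The effective resistance argument of Theorem~\ref{mainthm1} gives $\Reff(0,v_i) = h_i + O(1)$, so the walk reaches $v_i$ before returning to $0$ with probability of order $1/h_i$. Conditional on this, the walk alternates between excursions in $E_i$ (each of expected duration $O(n_i)$ by the expander dynamics) and brief visits to $h_i$, each of which leads to $0$ only with probability of order $1/h_i$ (again by effective resistance). The ``trap'' $T_i := E_i\cup(\N\cap[0,h_i])$ therefore behaves like a reservoir of effective size $h_in_i$: iterating a trap-version of Lemma~\ref{expander} applied to $T_i$ rather than to $E_i$ alone, survival for time $t_i \approx h_in_i\log n_i$ from inside $T_i$ has probability at least $n_i^{-c'}$ with $c'$ small (controlled by shrinking the constant $c$ in the definition of $t_i$).

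For the matching lower bound on $\P_0(\tau_0 = t_i)$, I would establish the trap-version of Lemma~\ref{outofexpander}: for every $u\in T_i$ and every $s$ in a suitable window near the commute time $h_in_i$, $\P_u(\tau_0 = s) \geq c/(h_in_i)$. Combined with the Markov property applied at time $t_i-s$, exactly as in the final line of the proof of Theorem~\ref{sharp}, this yields
\[ \P_0(\tau_0 = t_i) \;\geq\; \frac{c}{h_in_i}\,\P_0(\tau_0 \geq t_i - s) \;\geq\; \frac{c}{h_in_i}\,\P_0(\tau_0 \geq t_i), \]
and since $1/(h_in_i)$ is of order $\log t_i/t_i$ (using $n_i = h_i^{15}$ so $\log t_i \asymp \log n_i$ and $t_i \asymp h_in_i\log n_i$), the required concentration follows.

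The main obstacle is proving this trap-version of Lemma~\ref{outofexpander}, for which one needs a mixing-with-killing estimate for $T_i$ analogous to Lemma~\ref{mixingstopping}. Since $T_i$ is a ``lollipop'' rather than an expander, the mixing behaviour is not the pure-expander one: one has to combine the expansion of $E_i$ (mixing time $O(\log n_i)$) with the diffusive behaviour on $[0,h_i]$, ultimately using the spectral-gap estimate $1-\lambda_\ast(T_i)\geq 1/\bigl(2|E(T_i)|\,\max_u\Reff(u,0)\bigr)\approx 1/(h_in_i)$ for the chain killed at $0$, together with pointwise bounds coming from the expander mixing. A naive application of Lemma~\ref{expander} inside $E_i$ alone would give survival $\delta^{t_i/n_i}=n_i^{-ch_i}$, which is far too small; the correct picture — that the walk makes a geometric number $\approx h_i$ of independent excursions into $E_i$, each with escape probability $\approx 1/h_i$ — is what must be made quantitative, and verifying that the ignored contributions (the larger expanders, the smaller expanders, and excursions past $h_{i+1}$) do not spoil the pointwise lower bound on $\P_u(\tau_0=s)$ ties everything together.
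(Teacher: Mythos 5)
Your overall architecture is the right one: show $\P_0(\tau_0 \geq t_i)$ is dominated by trajectories trapped near $E_i$, then multiply by a pointwise ``hit at a precise time'' estimate of order $1/(h_in_i)\asymp\log t_i/t_i$. You also correctly check that both larger and smaller expanders are irrelevant at the $t_i$ scale via (\ref{seqcond}), matching the spirit of the paper's Lemma~\ref{inexpander}.

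The gap is exactly the one you flag as ``the main obstacle,'' and it is not a technicality. You want a pointwise estimate $\P_u(\tau_0=s)\geq c/(h_in_i)$ for $u$ in the lollipop $T_i$ and $s$ near the commute time $h_in_i$, and you sketch a spectral/mixing route to it. But such an estimate is substantially harder than Lemma~\ref{outofexpander}: it requires simultaneously controlling (a) mixing inside $E_i$, (b) the geometric number of excursions between $v_i$ and $h_i$ before escape, and (c) the terminal passage from $h_i$ to $0$. A naive spectral-gap bound of order $1/(h_in_i)$ does not give a \emph{lower} bound on $\P_u(\tau_0=s)$ for a single $s$ (the killed chain is not reversible-nice enough for the standard monotonicity of $\P_u(\tau_u=2t)$ to transfer, and in any case $u\neq 0$). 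The paper sidesteps exactly this by decomposing at $T_J$, the last visit to $h_i$ before returning to $0$: since $T_J$ and $\tau_0-T_J$ are independent, one only needs (i) the already-proved expander estimate $\P_u(\tau_{h_i}=\delta n)\geq c/n_i$ for $u\in E_i$ (Lemma~\ref{outofexpander}), (ii) the cheap bound $\P_h(\tau_0<\tau_h)\asymp 1/h_i$, and (iii) concentration of $\tau_0-T_J$ on a window of width $O(h_i^3)$. None of these requires the trap-level mixing-with-killing estimate you propose, and all three are elementary given the expander lemmas. Unless you carry out the spectral argument in detail (which is not at all routine for a non-homogeneous killed chain), your proof is missing its key step; the decomposition at the last visit to $h_i$ is the idea you would need to borrow.

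One smaller point: even if the trap-level estimate $\P_u(\tau_0=s)\geq c/(h_in_i)$ were available, it cannot hold for \emph{every} $u\in T_i$ --- for $u$ close to $0$ on the spine, $\tau_0$ is overwhelmingly concentrated at small times and the density at $s\approx h_in_i$ picks up an extra $1/h_i$ loss from the need to escape to $h_i$ first. You would need to restrict to $u\in E_i$ and then invoke a version of Lemma~\ref{inexpander} to argue the walk is indeed in $E_i$ at time $t_i-s$, conditionally on survival; this is another ingredient your outline leaves implicit.
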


The rough idea of the proof goes as follows (for brevity we omit the
$i$ subscript). The event $\tau_0 \geq t$ occurs mainly when the walk
hits $h$ before $0$ (which happens with probability $h^{-1}$) and then
stays around $h$ and $E$ for about $t$ steps without returning\vadjust{\goodbreak} to 0
[which happens with probability $(1-h^{-1})^{t/n}\approx n^{-c}$, since
there are roughly $t/n$ visits to $h$ and each time the probability of
returning to $0$ before $h$ is $h^{-1}$]. So this event happens with
probability about $h^{-1} n^{-c}$.

Now we need to give a lower bound for $\P(\tau_0 = t)$. Again, the
probability that the walk hits $h$ before $0$ is $h^{-1}$. Assume that
happened, and consider the excursion from 0 to 0. We can partition it
into 3 parts: until we hit $h$, between the first and last visits to
$h$ and after the last visit to $h$ and until we hit 0 again. Call the
lengths of these 3 parts $s_1$, $s_2$ and $s_3$, respectively, and
notice that they are independent. With high probability $s_1$ and $s_3$
are roughly $h^2 \ll t$. Conditioning on the values of $s_1$ and $s_3$,
we want a lower bound for the probability that $s_2=t-s_1-s_3$. The
probability that the walk will stay around $h$ (and $E$) for about $t$
steps is again $n^{-c}$ and the probability it will be at $h$ precisely
at time $t-s_1-s_3$ is of order $n^{-1}$ (since it is roughly mixed in
$E$). Finally, the probability that this is the last visit to $h$ is
$(3h)^{-1}$. Put together, we get a lower bound for $\P(\tau_0 = t)$ of
order $h^{-2} n^{-1-c}$, so the ratio is $h^{-1}n^{-1}\approx
t^{-1}\log t$, as required.


We begin with some preparatory lemmas and observations leading to the
proof of this theorem. In all of the statements below we are
considering a simple random walk on $G(\{h_i,n_i\})$ for $\{h_i,n_i\}$
satisfying (\ref{seqcond}). For a vertex $v$ of $G(\{h_i,n_i\})$ we
write $h(v)$ for its \textit{height}, that is, if $v\in E_i$, then $h(v) =
h_i$ and if $v \in\N$, then $h(v) =v$.
%
\begin{lemma} \label{easyfacts} We have:
\begin{longlist}[(2)]
\item[(1)] For any $h>0$, we have $\prob_0(\tau_h < \tau_0) = h^{-1}$.
\item[(2)] For any $i$ and $v$ such that $0 \leq h(v) < h_i$, we have $\E_v
( \tau_{h_i} \wedge\tau_0 ) \leq2h_i^2$.
\end{longlist}
\end{lemma}
\begin{pf} Part (1) is immediate since the effective resistance between
vertices $0$ and $h$ is precisely $h$. Part (2) follows immediately by
the commute time identity; see~\cite{LP}. Indeed, the effective
resistance between $v$ and $\{0,h_i\}$ is at most $h_i/2+C$ [where $C$
is the constant from (\ref{reff})], and the number of edges in the
subgraph \textit{between} $0$ and $h_i$ [i.e., the subgraph spanned on all
vertices $v$ having $0 \leq h(v) < h_i$ together with $h_i$] is at most
\[
h_i + \sum_{j=1}^{i-1}
n_j \leq2h_i
\]
by condition (\ref{seqcond}). We conclude the proof since $h_i \geq2C$
by our definition.
\end{pf}
%
\begin{lemma} \label{lowerboundtau0} There exists a constant $c=c(\rho
)>0$ such that for any $i\geq1$ and $k\geq1$, we have
\[
\prob_0 ( \tau_0 \geq c k n_i
h_i) \geq\frac{c^k }{ h_i}
\]
and
\[
\prob_0 ( \tau_0 \geq c k n_i
h_i \mid\tau_{h_i} < \tau_0)
\geq{c^k}.
\]
\end{lemma}
\begin{pf} Starting from $h_i$, the probability of visiting $0$ before
returning to $h_i$ is $(3h_i)^{-1}$ by Lemma~\ref{easyfacts}, hence the
probability of having $kh_i$ visits to $h_i$ before hitting $0$ has
probability $(1-(3h_i)^{-1})^{kh_i}$. By Lemma~\ref{expander}, with
probability at least $\delta/3$ the random walk starting at $h_i$
spends at least $\delta n_i$ steps in the expander $E_{i}$, where
$\delta>0$ is the constant from Lemma~\ref{expander} ($h_i$ has degree
$3$). Given the number $K$ of visits to $h_i$ before returning to $0$,
the time spent away from $0$ is distributed as the sum of $K$ i.i.d.
random variables each being at least $\delta n_i$ with probability at
least $\delta/3$. We deduce that there exists some constant $c>0$ such
that for any $k\geq1$, we have
\[
\prob_0 ( \tau_0 \geq\tau_{h_i} + c k
n_i h_i \mid\tau_{h_i} < \tau_0
) \geq c^k.
\]
By Lemma~\ref{easyfacts} the event $\tau_{h_i} < \tau_0$ occurs with
probability $h_i^{-1}$, completing the proof.
\end{pf}

For the next step we define $\tau_0^{(m)}$ to be the $m$th return time
to $0$. That is, $\tau_0^{(1)} = \tau_0$ and for $m>1$
\[
\tau_0^{(m)} = \min\bigl\{ t > \tau_0^{(m-1)}\dvtx
X_t = 0 \bigr\}.
\]
It will also be convenient to define $\tau_0^{(0)}=0$.
%
\begin{lemma} \label{numexc1} There exist constants $C=C(\rho)>0$ and
$c=c(\rho)>0$ such that for any $i \geq1$ and any $k\geq1$ we have
\[
\prob_0 \bigl( \tau_0^{(Ckh_i)} <
kh_i n_i \bigr) \leq Ce^{-ck}.
\]
\end{lemma}
\begin{pf} Since
\[
\tau_0^{(Ckh_i)} = \sum_{m=1}^{Ckh_i}
\bigl(\tau_0^{(m)} - \tau_0^{(m-1)}
\bigr),
\]
we learn that $\tau_0^{(Ckh_i)}$ is a sum of $Ckh_i$ i.i.d. random
variables distributed as $\tau_0$. By Lemma~\ref{lowerboundtau0}, the
probability of each of these variables to be at least $n_ih_i$ is at
least $ch_i^{-1}$ for some small $c>0$. Large deviation for binomial
random variable immediately gives that for large enough $C>0$ we have
\[
\prob \Biggl( \sum_{m=1}^{Ckh_i} \bigl(
\tau_0^{(m)} - \tau_0^{(m-1)}\bigr) \leq
kh_i n_i \Biggr) \leq Ce^{-c_1k}
\]
for some constant $c_1>0$.
\end{pf}

The following lemma shows that the random walk on $G$ spends most of
its time inside the appropriate expander.
%
\begin{lemma} \label{inexpander} There exists a constant $C=C(\rho)>0$
such that for any integer $t$ satisfying $h_i n_i \leq t \leq h_i^2
n_i$ for some $i\geq1$ we have
\[
\P_0 ( X_t \in E_i ) \geq1-
Ch_i^{-2}.
\]
\end{lemma}
\begin{pf} For convenience we write $h$ for $h_i$, $n$ for $n_i$. Let
$t_0= t- 2h^{12}$ and define iteratively $t_{k+1}$ to be
\[
t_{k+1} = \min \{ \ell> t_k\dvtx  X_\ell= h \}
\]
for any integer $k \ge0$. For each $k\ge1$, the walk between times
$t_k$ and $t_{k+1}$ does an excursion, starting and ending at $h$. Call
such an excursion \textit{good} if $t_{k+1}-t_k > 2h^{12}$ and
$X_{t_k+1}\in E_i$, \textit{short} if $t_{k+1}-t_k < h^{11}$ and
\textit{bad} if it is neither good nor short. Let $K_{\mathrm{good}}$ be the
index of the first good excursion and $K_{\mathrm{bad}}$ be the index of
the first bad excursion and define the following events:
\begin{eqnarray*}
A_0 &=& \bigl\{t_1 > t-h^{12} \cap
X_t \notin E_i \bigr\},
\\
A_1 &=& \{K_{\mathrm{good}} \ge h \},
\\
A_2 &=& \{K_{\mathrm{bad}} \le h \}.
\end{eqnarray*}

We claim that if neither of these event occur, then $X_t\in E_i$.
Indeed, if $A_0$ did not occur then either $X_t \in E_i$ or $t_1 \leq
t-h^{12}$. In the latter case, since both $A_1$ and $A_2$ do not occur,
the first $K_{\mathrm{good}}-1 < h$ excursions are short and are followed
by a good excursion. The total length of these short excursions is no
more than $h^{12}$; hence $t_{K_{\mathrm{good}}} < t$ and the length of
the good excursion is at least $2 h^{12}$ so $t_{K_{\mathrm{good}}+1} >
t$. By definition, at all times in $(t_k,t_{k+1})$ of a good excursion
the walker is located in $E_i$, hence $X_t \in E_i$.

We now bound the probabilities of the three events, starting with
$A_1$. For any excursion, the probability that $X_{t_k+1}\in E_i$ is
$\frac13$. By Lemma~\ref{expander} we have that
\[
\P_0 \bigl(t_{k+1}-t_k \geq2 h^{12}
\mid X_{t_k+1}\in E_i \bigr) \geq c
\]
for some constant $c=c(\rho)>0$. Hence, the probability that an
excursion is good is at least $c/3$. Since the excursions are
independent, the probability of $A_1$ is bounded by $(1-c/3)^h = o(h^{-2})$.

We now bound $\P(A_2)$. An excursion is bad only if it is either longer
than $h^{11}$ and $X_{t_k+1} \notin E_i$, or its length is in
$[h^{11},2h^{12}]$ and $X_{t_k+1} \in E_i$. If $X_{t_k+1}=h+1$, then it
is standard that the probability that the walk does not return to $h$
in $h^{11}$ steps is of order $h^{-{11}/{2}}$. If $X_{t_k+1}=h-1$,
then the probability that the walk does not return to $h$ in $h^{11}$
steps is bounded by $e^{-c h^9}$ for some constant $c>0$. Indeed, there
are less than $2h$ edges below $h$ and the resistance from any vertex
below $h$ is at most $h$. The commute time identity now implies that
from any vertex below $h$ the probability of hitting $h$ within $4h^2$
steps is at least $1/2$, and the $e^{-ch^9}$ bound follows by iterating
this. Finally, if $X_{t_k+1}\in E_i$, then (\ref{mixing}) implies that
the the probability of $t_{k+1}-t_k \in[h^{11},2 h^{12}]$ is $O(h^{12}
n^{-1})$ which is $O(h^{-3})$ by (\ref{seqcond}). We get that the
probability that an excursion is bad is $O(h^{-3})$, hence the
probability that one of the first $h$ excursions is bad is $O(h^{-2})$.

We are left to bound $\P(A_0)$. If $X_{t_0+1}\in E_i$, then for $A_0$
to occur we must have that $t_1-t_0 \in[h^{12},2 h^{12}]$. As before,
(\ref{mixing}) implies that the probability of $t_1-t_0 \in[h^{12},2
h^{12}]$ is $O(h^{-3})$. If $X_{t_0+1}$ is below $h$, then the
probability of not hitting $h$ in the next $h^{12}$ steps is bounded by
$e^{-c h^{10}}$, by the same argument as above, using the commute time identity.

Finally, we need to bound the probability that $A_0$ occurs, and
$X_{t_0+1}$ is above $h$. Let $N_0$ be the number of visits to $0$ by
time $t$. By Lemma~\ref{numexc1} (with $k=h$) there are constants
$C,c>0$ such that
\[
\prob\bigl( N_0 \geq Ch^2 \bigr) \leq
Ce^{-ch}.
\]
In each such excursion from $0$ to $0$ the probability of reaching
$h^5$ is $h^{-5}$ by Lemma~\ref{easyfacts}. Hence,\vspace*{1pt} the probability that
the walk reaches height $h^5$ before time $t$ is at most
$C(h^{-3}+e^{-ch})$. Now, from any vertex between $h$ and $h^5$, the
expected time to hit either $h$ or $h^5$ is at most $h^{10}$.
Therefore, the probability that the walk does not hit $h$ or $h^5$ in
$h^{12}$ steps is at most $e^{-ch^2}$, for some constant $c>0$. Put
together, the probability that $X_{t_0+1}$ is above $h$, but $t_1>t_0 +
h^{12}$ is bounded by $C h^{-3}$.
%
\end{pf}
\begin{pf*}{Proof of Theorem~\ref{sharpness}}
Fix $i$ and abbreviate $t=t_i$, $h=h_i$ and $n=n_i$. We have that
\[
\prob_{0} (\tau_0 \geq t) = \prob_{0}(
\tau_0 \geq t \mbox{ and }\tau_0 < \tau_h) +
h^{-1} \prob_{0} (\tau_0 \geq t \mid
\tau_h < \tau_0).
\]
The first term\vspace*{1pt} is negligible since starting from any vertex $v$ between
$0$ and $h$, we have $\P_v(\tau_0 \wedge\tau_h \geq2h^2) \le\frac12$
by Lemma~\ref{easyfacts}, and hence, by the Markov property, $\P_0(\tau_0 \wedge\tau_h \ge t) \le e^{-c t / h^2}$. Theorem~\ref{mainthm1}
gives that $\P_0(\tau_0\ge t)\geq4^{-1} t^{-1/2}$ and since $t \geq
h^3$, we conclude that
%
\begin{equation}
\label{geqbd} \prob_{0} (\tau_0 \geq t) = \bigl(1+o(1)
\bigr) h^{-1} \prob_{0} (\tau_0 \geq t \mid
\tau_h < \tau_0).
\end{equation}

Assuming the event $\tau_h<\tau_0$ occurred, let $T_0= \tau_h$ and for
$j\geq1$ define
\[
T_j = \min\{ t > T_{j-1}\dvtx  X_t =h \},
\]
to be the time of the $j$th visit to $h$. Also, let $J=\max\{j\dvtx
T_j < \tau_0\}$ be the index of the last visit to $h$ before returning
to $0$. We define a sequence of random bits $\{b_j\}_{j\geq0}$ in the
following way. We set $b_j=1$ if $X_t = 0$ for some $T_j < t < T_{j+1}$
and $b_j=0$ otherwise. Conditioned on the history of the walk until
$T_j$ the probability of $b_{j}=1$ is exactly $(3h)^{-1}$, since the
walk needs to take a step to $h-1$ and then the probability of hitting
$0$ before $h$ is $h^{-1}$, by Lemma~\ref{easyfacts}. Hence, the
distribution of $J$ is geometric with parameter $(3h)^{-1}$.

Observe that the distribution of the walk between $T_J$ and $\tau_0$ is
that of a simple random walk started at $h$ and conditioned to hit $0$
before returning to $h$ and is independent of the walk until time $T_J$.
In particular, $T_J$ is independent of $\tau_0-T_J$. We may now bound
$\P_0(\tau_0=t)$ from below by
\[
\P_0(\tau_0=t)\ge h^{-1} \P_0
\bigl(T_J = t-(\tau_0 - T_J) \mid
\tau_h < \tau_0\bigr).
\]
Since $T_J$ is independent of $\tau_0-T_J$ we may condition on the
event $\tau_0-T_J=t-s$ and get that
%
\begin{eqnarray}
\label{sharpnesscondition} &&\P_0\bigl(T_J = t-(
\tau_0 - T_J) \mid\tau_h <
\tau_0\bigr)
\nonumber\\[-8pt]\\[-8pt]
&&\qquad=\sum_{s} \prob(\tau_0-T_J
= t-s \mid \tau_h < \tau_0) \prob(T_J=s
\mid\tau_h < \tau_0).
\nonumber
\end{eqnarray}
When starting a simple random walk at $h-1$, the expected hitting time
of $h$ or $0$ is at most $2h^2$ and the probability of hitting $0$
before $h$ is $h^{-1}$ by Lemma~\ref{easyfacts}. Thus, $\E[\tau_0 -
T_J \mid\tau_h < \tau_0] \leq2h^3$ hence
\[
\P_0\bigl( \tau_0 - T_J >
4h^3 \mid\tau_h < \tau_0 \bigr) \leq
\tfrac12.
\]
Therefore, it is enough to show that for any $s$ satisfying $t-4h^3\le
s \le t$, we have
%
\begin{equation}
\label{sharpnesstoshow} \P_0(T_J = s \mid
\tau_h < \tau_0) \ge\frac{c
}{ hn} \P_0(
\tau_0 \geq t \mid\tau_h < \tau_0),
\end{equation}
since then by (\ref{geqbd}) and (\ref{sharpnesscondition}) we get that
\[
\P_0(\tau_0=t) \geq\frac{ c }{ h n}
\P_0(\tau_0 \geq t) = \Theta \biggl( \frac{\log t }{ t}
\biggr) \P_0(\tau_0 \geq t).
\]

To show (\ref{sharpnesstoshow}) we take some small $\delta$ and bound
\[
\P_0(T_J = s \mid\tau_h <
\tau_0) \geq\prob_0(T_J=s,
X_{s-\delta n} \in E_i, \tau_0 \geq s \mid
\tau_h < \tau_0).
\]
By the Markov property the last probability is at least
%
\begin{equation}
\label{sharpnessmidstep2}\qquad\P_h(\tau_0 <
\tau_h) \min_{u \in E_i} \P_u(\tau_h=
\delta n) \P_0(X_{s-\delta n} \in E_i,
\tau_0 \geq s-\delta n \mid\tau_h < \tau_0).
\end{equation}
Lemmas~\ref{outofexpander} and~\ref{easyfacts} give that the
product of the first two probabilities is at least $c(hn)^{-1}$. By
Lemmas~\ref{inexpander} and~\ref{easyfacts} we have that
\[
\P_0(X_{s-\delta n} \notin E_i\mid
\tau_h < \tau_0) \le C h^{-1},
\]
and since $s=(1+o(1))chn\log n$, Lemma~\ref{lowerboundtau0} with
$k=c\log n$ gives that
\[
\P_0 (\tau_0 \geq s- \delta n\mid\tau_h <
\tau_0) \geq\delta^{c
\log
n} \geq h^{-0.5}
\]
as long as $c>0$ is chosen to be small enough. Since $\mu(A \cap B)
\geq\mu(B) - \mu(A^c)$ for any probability measure $\mu$ and events
$A,B$ we get that
\begin{eqnarray*}
\P_0 (X_{s-\delta n} \in E_i, \tau_0
\geq s-\delta n \mid\tau_h < \tau_0) &\geq& \bigl(1-o(1)
\bigr) \P_0 (\tau_0 \geq s-\delta n \mid
\tau_h < \tau_0)
\\
&\geq& \bigl(1-o(1)\bigr)\P_0(\tau_0 \geq t \mid
\tau_h < \tau_0 ),
\end{eqnarray*}
which together with (\ref{sharpnessmidstep2}) shows (\ref
{sharpnesstoshow}) and the proof is complete.
\end{pf*}

\section{Combs}\label{combssec}

Recall the definition of the comb product of two graphs and of the
finite collision property in Section~\ref{finitecolsec}. In this
section we prove that the graph $G=G(\{h_i,n_i\})$, for $\{h_i, n_i\}$
satisfying (\ref{seqcond}), is such that $\comb_0(\Z,G)$ does not have
the finite collision property. We begin with a sketch to illustrate the
idea of the proof.

In the rest of this section we sometimes write $h,n$ for $h_i,n_i$,
respectively. Our goal is to get the two walkers inside the same
expander $E_i$ since then they collide with positive probability by
Lemma~\ref{collisioninsideexpander}. Starting from the base of the
comb, the probability of reaching height $h$ before returning to the
base is $(3h)^{-1}$. If this happens, the random walk has positive
probability of being ``swallowed'' in the expander $E_i$ and staying in
it for $n$ steps. At each visit to the tip of the expander, that is,
the vertex $h$, the probability of getting back to the base of the comb
before returning to $h$ is $(3h)^{-1}$. The other expanders, above and
below $E_i$, are either too small or too far away to matter. We deduce
that by time $hn$ the typical behavior of the walker is to walk about
$h$ steps on the base of the comb, then rise to height $h$, have about
$h$ excursions of length $n$ inside the expander and finally return to
the base of the comb.

Thus, after $h^2 n$ steps, each random walker has performed about $h^2$
steps on the base of the comb (this is a simple random walk on $\Z$)
and in about $h$ of them it performs excursions of length $hn$ in which
it spends most of the time in the expander~$E_i$. The base points on
$\Z
$ of these $h$ long excursions are roughly $h$ uniform points in $\{-h,\ldots, h\}$, so the probability that in at least one of them the two
walkers are over the same base point is uniformly positive. We conclude
that by time $h^2 n$ the two walkers have positive probability of
colliding. This occurs in all scales, that is, for all $i\geq0$. Each
scale has almost no influence on what occurs in the next scale; hence
we get the required result.

We now make this heuristic precise. Given a simple random walk $X_t$ on
$\comb(\Z,G)$ we write $X_t^{(1)}$ and $X_t^{(2)}$ for its first and
second coordinates, respectively. Note that $X_t^{(1)}$ is a time change
of a simple random walk on $\Z$, and $X_t^{(2)}$ is distributed
precisely as simple random walk on $G(\{h_i,n_i\})$ equipped with extra
two loops at $0$. One can easily check that the estimates of Section
\ref{secsharp} are valid for this graph as well. Put $T_0=0$ and $T_i =
T_{i-1} + n_i h_i^2$. For any $i\geq1$ and $k=1,\ldots, h_i$ define
the events
\[
I_k = \bigl\{ X_{T_{i-1} + kh_in_i}^{(1)} =
Y_{T_{i-1} + kh_in_i}^{(1)} \mbox{ and }X_{T_{i-1} + kh_in_i}^{(2)}\in
E_i \mbox{ and } Y_{T_{i-1} +
kh_in_i}^{(2)} \in
E_i \bigr\}.
\]
The following lemma is the key step for proving Theorem \ref
{nocollisionthm}. We remark that all constants in this section depend
on $\rho$ from the definition of $G$ in the previous section.
%
\begin{lemma}\label{momarg} There exists a constant $c=c(\rho)>0$ such
that for all $i \geq1$ we have
\[
\prob \Biggl( \bigcup_{k=1}^{h_i}
I_k \Bigm| X_{T_{i-1}}, Y_{T_{i-1}} \Biggr) \geq c.
\]
\end{lemma}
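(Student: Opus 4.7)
The plan is to apply the second moment method to $N = \sum_{k=1}^{h_i} \mathbf{1}_{I_k}$. Abbreviate $h=h_i$, $n=n_i$, $t_k=T_{i-1}+khn$, and $\Delta_0 = X^{(1)}_{T_{i-1}}-Y^{(1)}_{T_{i-1}}$. The aim is to prove
$$
\E\big[N \,\big|\, \cF_{T_{i-1}}\big] \;\geq\; c_1 \quad\text{and}\quad \E\big[N^2 \,\big|\, \cF_{T_{i-1}}\big] \;\leq\; C_2
$$
for absolute constants, so that Paley--Zygmund will yield $\prob(N \geq 1 \mid \cF_{T_{i-1}}) \geq c_1^2/C_2$, which is the lemma. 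Using the deterministic bound $|X^{(1)}_{T_{i-1}}|\leq T_{i-1}\leq 2h_{i-1}^2 n_{i-1}$ together with the freedom to strengthen (\ref{seqcond}) as needed, we may assume $\Delta_0^2 \ll h$.

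The key structural ingredient is a decoupling. Conditional on the trajectories of the tooth processes $X^{(2)}$ and $Y^{(2)}$, the displacements $X^{(1)}_t - X^{(1)}_{T_{i-1}}$ and $Y^{(1)}_t - Y^{(1)}_{T_{i-1}}$ are independent simple random walks on $\Z$ run for deterministic numbers of steps $L^X_t$ and $L^Y_t$ respectively; unconditionally $L^X_t \sim \mathrm{Bin}(V^X_t, 2/3)$ where $V^X_t$ is the local time of $X^{(2)}$ at $0$. Indeed, at every base visit the comb walker independently takes a $\Z$-step with probability $2/3$ and otherwise steps into the tooth.

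For the first moment I would bound $\prob(I_k)$ below by the joint probability of three favorable events: $A_k=\{X^{(2)}_{t_k}\in E_i\}$ together with its counterpart $B_k$ for $Y$; $\mathcal{V}_k = \{V^X_{t_k}, V^Y_{t_k} \in [ckh, Ckh]\}$; and the coincidence $C_k$. Lemma~\ref{inexpander} (which carries over to the lazy walk $X^{(2)}$ since laziness does not affect positions) gives $\prob(A_k), \prob(B_k) \geq 1 - Ch^{-2}$. The upper bound on $V^X_{t_k}$ is Proposition~\ref{numexc1}; the matching lower bound follows from the excursion structure spelled out in the sketch --- each excursion from $0$ is ``swallowed'' into $E_i$ with probability $1/h$ with resulting length $\Theta(hn)$, and otherwise has length $\Theta(h)$ --- so $\E[\tau_0\wedge hn] = \Theta(n)$ and $\mathrm{Var}(\tau_0\wedge hn) = O(hn^2)$, and Chebyshev places $\sum_{j=1}^{ckh}(\tau_0^{(j)}\wedge hn) \leq khn$ with positive probability, giving $V^X_{t_k}\geq ckh$. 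On $\mathcal{V}_k$ the decoupling reduces $C_k$ to a local CLT estimate: writing $L = L^X_{t_k}+L^Y_{t_k}$, we have $\prob(\mathrm{SRW}_{L} = -\Delta_0) = \Theta(1/\sqrt{L}) = \Theta(1/\sqrt{kh})$ since $\Delta_0^2 \ll kh$ (parity is handled by grouping consecutive values of $k$, whose parities of $L$ flip with positive probability). Summing, $\E N \geq c\sum_{k=1}^{h}(kh)^{-1/2} \geq c'$.

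For the second moment I would use the strong Markov property at time $t_j$: conditional on $I_j$ the walkers restart from positions $(a,u_X),(a,u_Y)$ with $u_X,u_Y\in E_i$, so rerunning the first-moment bound with initial gap $0$ over a fresh interval of length $(k-j)hn$ gives $\prob(I_k\mid I_j) \leq C/\sqrt{(k-j)h}$; therefore
$$
\E N^2 \;\leq\; \E N + 2\sum_{j<k}\frac{C}{\sqrt{jh}}\cdot\frac{C}{\sqrt{(k-j)h}} \;\leq\; C',
$$
using the classical bound $\sum_{1\le j<k}1/\sqrt{j(k-j)}=O(1)$ uniformly in $k$. The main obstacle is the lower bound $V^X_{t_k}\geq ckh$: since $\E \tau_0 = \infty$ one cannot apply the law of large numbers directly, and the truncation $\tau_0 \wedge hn$ must be combined with an exponential tail estimate beyond $hn$ --- which itself comes from Lemma~\ref{expander}, since each ``excursion cycle'' inside $E_i$ takes $\Theta(n)$ time and escapes with constant probability.
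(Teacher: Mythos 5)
Your proposal follows essentially the same route as the paper: a second-moment (Paley--Zygmund) argument on $N=\sum_{k=1}^{h_i}\mathbf{1}_{I_k}$, with $\E N\geq c$ coming from a local-CLT estimate after decoupling the $\Z$-displacements from the tooth trajectories, $\prob(I_k)\approx(kh)^{-1/2}$ (the paper's Lemma \ref{firstmoment}), and $\E N^2=O(1)$ from the strong Markov property at the collision times together with the classical bound $\sum_{j<k}1/\sqrt{j(k-j)}=O(1)$ (the paper's Lemma \ref{secondmoment}). The decoupling observation --- that conditional on the tooth processes the $\Z$-coordinates are independent lazy random walks --- is exactly the mechanism behind the paper's $\ell(\cdot)$ and its Lemma \ref{numexcursion}, so the skeleton of your argument matches.

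The one place your write-up has a real gap is the step you yourself flag: the lower bound on the number of $\Z$-steps, where you propose the truncation $\tau_0\wedge hn$ together with Chebyshev. The implication ``$\sum_{j\le ckh}(\tau_0^{(j)}\wedge hn)\le khn\Rightarrow V^X_{t_k}\ge ckh$'' is false, because truncating by \emph{time} hides the contribution of excursions that exceed $hn$; and $\prob(\tau_0>hn)=\Theta(1/h)$ is \emph{not} exponentially small (by Theorem \ref{mainthm1} the return time is heavy-tailed), so ``an exponential tail estimate beyond $hn$'' is not available for $\tau_0$ itself. What does work --- and what the paper does --- is to truncate by \emph{position} rather than time: replace $\tau_0\wedge hn$ with $\tau_0\wedge\tau_{h^4}$, note that over the first $O(kh)$ excursions the probability of ever reaching height $h^4$ is only $O(k/h^3)$, and then use Lemma \ref{returnestimates} to get a genuine exponential moment for $\tau_0\wedge\tau_{h^4}$ at scale $nh$. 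That yields the concentration $\prob(\ell(khn)\le C^{-1}kh)\le h^{-2}+Ce^{-ck}$ (Lemma \ref{numexcursion}), which is what the first- and second-moment bounds actually require. So your plan is the paper's plan, but this particular technical step needs to be replaced by the $\tau_{h^4}$-truncation to close the argument.
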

\begin{pf*}{Proof of Theorem~\ref{nocollisionthm}} Write $\A_i$ for
the event that $X_t$ and $Y_t$ collide in the time interval $[T_{i-1},
T_i]$. Lemma~\ref{momarg} together with Lemma \ref
{collisioninsideexpander} shows that
\[
\prob(\A_i \mid X_{T_{i-1}}, Y_{T_{i-1}} )\geq c
\]
for some constant $c>0$. We deduce that $\A_i$ occurs infinitely many
times with probability $1$, completing the proof.
\end{pf*}
%

We will prove Lemma~\ref{momarg} using a second moment argument;
however, we require two additional preparatory lemmas about the random
walk on a single copy of $G(\{h_i,n_i\})$, the tooth of the comb.
%
\begin{lemma}\label{returnfrommiddle} Consider the simple random walk
on $G$. There exist constants $C=C(\rho)>0$ and $c=c(\rho)>0$ such that
for any $i$ and any vertex $v$ satisfying $0 \leq h(v)\leq h_i^4 $, we have
\[
\E_v \bigl[ e^{c n_i^{-1} (\tau_0\wedge\tau_{h_i} \wedge\tau
_{h_i^4})} \bigr] \leq C.
\]
\end{lemma}
\begin{pf} For any vertex $v$ of $G$ with height between $0$ and $h^4$
we have that $\E_v ( \tau_0 \wedge\tau_h \wedge\tau_{h^4}) \leq Cn$.
To see this observe that there are three cases: if $v$ is in the
expander $E_{h}$ the expected hitting time of $h$ is $O(n)$ by the
commute time identity and (\ref{reff}). If $h(v)>h$, then the expected
time to hit either $h$ or $h^{4}$ is, by the commute time identity, at
most $h^8$, which is $o(n)$ [there are no expanders between $h$ and
$h^4$ by (\ref{seqcond})]. Similarly, if $h(v) < h$, then the expected
time to hit either $0$ or $h$ is $o(n)$. Hence for any such $v$ we have
\[
\prob_v (\tau_0 \wedge\tau_h \wedge
\tau_{h^4} \geq2Cn ) \leq\tfrac{1 }{2},
\]
hence
\[
\prob_v (\tau_0 \wedge\tau_h \wedge
\tau_{h^4} \geq Bn ) \leq e^{-cB},
\]
and the (ii) follows by integration.\vadjust{\goodbreak}
\end{pf}
%
\begin{lemma} \label{returnestimates} Consider the simple random walk
on $G$. There exist constants $C=C(\rho)>0$ and $c=c(\rho)>0$ such that
for any $i$ and any $B>0$,
\[
\prob_0( \tau_0 \wedge\tau_{h_i^4} \geq
Bn_i h_i ) \leq\frac{2 e^{-cB}
}{ h_i},
\]
hence
\[
\E_0 e^{c (n_i h_i)^{-1} (\tau_0 \wedge\tau_{h_i^4})} \leq1 + \frac{C
}{ h_i}.
\]
%
\end{lemma}
\begin{pf}
%
Let $N_h$ denote the number of visits to $h$ before time $\tau_0
\wedge
\tau_{h^4}$. We have that
\[
\prob_0 ( N_h \geq k) \leq\frac{1 }{ h} \biggl( 1
- \frac{1 }{3h} \biggr)^{k-1} \leq\frac{e^{-(k-1)/3h} }{ h},
\]
since reaching to $h$ before $0$ has probability $h^{-1}$, and given
that at each visit to~$h$, the probability of visiting $0$ before
returning to $h$ is precisely $(3h)^{-1}$. By this bound it suffices to
prove that
%
\begin{equation}
\label{midpoint} \prob_0 ( \tau_0 \wedge
\tau_{h^4} \geq Bnh \mbox{ and }N_h \leq cBh ) \leq
\frac{e^{-cB} }{ h}
\end{equation}
for some small $c>0$. Let $\gamma_m$ for $m=1,\ldots, cBh$ be i.i.d.
random variables distributed as the stopping time $\tau_0 \wedge\tau_h
\wedge\tau_{h^4}$ for the random walk starting at $h$. Then the
probability on the left-hand side of (\ref{midpoint}) is at most the
probability that $\sum_{m=1}^{cBh} \gamma_m \geq Bnh$. By Lemma \ref
{returnfrommiddle} we have that there exists some $c_2>0$ such that
$\E_0 e^{c_2 n^{-1} \gamma_m} \leq C$. Hence, by independence and
Markov's inequality we get that
\[
\prob \Biggl(\sum_{m=1}^{cBh}
\gamma_m \geq Bnh \Biggr) \leq\frac{C^{c Bh}
}{ e^{c_2 Bh}},
\]
which is of order $e^{-cBh}$ if $c=c(c_2,C)>0$ is chosen small enough
compared with~$c_2$. This proves (a stronger assertion than) (\ref
{midpoint}) and concludes the proof.
%
\end{pf}

Consider now the random walk $X_t$ on $\comb(\Z, G)$. Write $X^{(2)}_t$
for the second coordinate of $X_t$ and for any $i$ let $\ell_i(t)$
denote the random variable
\[
\ell_i(t) = \bigl| \bigl\{ j \in[T_{i-1}, T_{i-1}+t]\dvtx
X^{(2)}_{j-1} = X^{(2)}_{j} = 0 \bigr\}
\bigr|.
\]
In other words, $\ell_i(t)$ counts the number of times $j \in[T_{i-1},
T_{i-1}+t]$ in which $X_j$ walked on the $\Z$ base of the comb.
%
\begin{lemma}\label{numexcursion} Consider the simple random walk on
$\comb(\Z, G)$. There exist constants $C=C(\rho)>0$ and $c=c(\rho)>0$
such that for any $i$ and any $k=1,\ldots, h_i$ we have
\[
\prob \bigl( \ell_i(kh_i n_i) \geq
Ckh_i \bigr) \leq C e^{-ck}\vadjust{\goodbreak}
\]
and
\[
\prob \bigl( \ell_i(kh_in_i) \leq
C^{-1} k h_i \bigr) \leq\frac{1 }{
h_i^2} + C
e^{-ck}.
\]
\end{lemma}
\begin{pf} Part (i) of the lemma is equivalent to Lemma~\ref{numexc1}.
For $m\geq1$ write $t_m$ for the time in which $X_t$ takes the $m$th
step on $\Z$. That is, $t_0=0$ and for $m\geq1$ we have
\[
t_m = \min \bigl\{ j > t_{m-1}\dvtx  X^{(2)}_{j-1}
= X^{(2)}_{j} = 0 \bigr\}.
\]
To prove the second assertion of the lemma, note that the event $\ell_i(khn) \leq C^{-1} k h$ is equivalent to
%
\begin{equation}
\label{badevent} \sum_{m=1}^{C^{-1}kh}
(t_m - t_{m-1}) \geq khn.
\end{equation}
For each $m$, let $A_m$ be the event that $X_t$ visited the vertex
$h^4$ on one of the comb's teeth between times $t_{m-1}$ and $t_m$, and
write $\bar{A}_m$ for the complement of $A_m$. By Lemma~\ref{easyfacts}
we have that $\prob(A_m)=h^{-4}/3$. Thus, the probability that $A_m$
occurs for some $m=1,\ldots, C^{-1}kh$ is at most $h^{-2}$ since
$k\leq h$. We get that
\[
\prob \Biggl( \sum_{m=1}^{C^{-1}kh}
(t_m - t_{m-1}) \geq khn \Biggr) \leq\frac{1 }{ h^2} +
\prob \Biggl( \sum_{m=1}^{C^{-1}kh}
(t_m - t_{m-1}){\mathbf1}_{\bar{A}_m} \geq khn \Biggr).
\]
To bound the last term of this inequality, observe that
\[
(t_m-t_{m-1}){\mathbf1}_{\bar{A}_m} \stackrel{(d)} {\leq}
\tau_0 \wedge \tau_{h^4},
\]
where $\tau_0$ and $\tau_{h^4}$ are the corresponding hitting times on
$G$. By Lemma~\ref{returnestimates} there exists some $C_2>0$ such that
\[
\E e^{c (nh)^{-1} (t_m-t_{m-1}){\mathbf1}_{\bar{A}_m}} \leq1 + \frac{C_2
}{
h},
\]
and by independence and Markov's inequality we deduce that
\[
\prob \Biggl( \sum_{m=1}^{C^{-1}kh}
(t_m - t_{m-1}){\mathbf1}_{\bar{I}_m} \geq khn \Biggr)
\leq\frac{  ( 1+ {C_2 }/{ h}  )^{C^{-1} kh} }{
e^{ck} },
\]
which is at most $Ce^{-ck}$ if $C=C(c,C_2)>0$ is chosen large enough.
This completes the proof.
\end{pf}
%
\begin{lemma} \label{firstmoment} There exist constants $C=C(\rho)>0$
and $c=c(\rho)>0$ such that for any $i$ and any $k=1,\ldots, h_i$
\[
\frac{c }{\sqrt{kh_i}} \leq\P( I_k \mid X_{T_{i-1}},
Y_{T_{i-1}}) \leq\frac{C }{\sqrt{kh_i}}.
\]
\end{lemma}
\begin{pf}
Lemma~\ref{numexcursion} implies that for some positive constants $C,c$
we have
\[
\prob \bigl( C^{-1} kh \leq\ell(khn) \leq Ckh \mid X_{T_{i-1}},
Y_{T_{i-1}} \bigr) \geq1-Ce^{-ch}-Ch^{-2}.
\]
So with this probability, this holds for both walks $X_t$ and $Y_t$.
Clearly $X_{T_{i-1}}^{(1)}$ and $Y_{T_{i-1}}^{(1)}$ are of distance at
most $T_{i-1}$ away from the origin of $\comb(\Z,G)$, and $T_{i-1}\ll
\sqrt{h}$ by (\ref{seqcond}). Thus, the local CLT for the simple random
walk on $\Z$ implies that the probability that at time $T_{i-1}+khn$
the two walkers are in the same copy of $G$ is at least $c(kh)^{-1/2}$
and at most $C(kh)^{-1/2}$. This shows $\prob(I_k \mid X_{T_{i-1}},
Y_{T_{i-1}})\leq C(kh)^{-1/2}$. Furthermore, by Lemma~\ref{inexpander}
the probability that at that time the walks are not inside the expander
$E_{h}$ is at most $Ch^{-2}$. The lower bound $\prob(I_k \mid
X_{T_{i-1}}, Y_{T_{i-1}})\geq c(kh)^{-1/2}$ follows.
\end{pf}
%
\begin{lemma}\label{secondmoment} There exists a constant $C=C(\rho)>0$
such that for any $i$ and any $k_1 < k_2$ in $\{1,\ldots, h_i\}$, we have
\[
\prob(I_{k_1} I_{k_2} \mid X_{T_{i-1}},
Y_{T_{i-1}}) \leq\frac{C
}{ h_i \sqrt{k_1(k_2-k_1)}}.
\]
\end{lemma}
\begin{pf}
As in the proof of Lemma~\ref{firstmoment}, with probability at least
$1-O(h^{-2})$, we have that $\ell(k_1 h n) = \Theta(k_1 h)$ and $\ell
(k_2 h n) = \Theta(k_2 h)$. Also as before, since $T_{i-1} \ll\sqrt {h}$ the local CLT for the simple random walk on $\Z$ implies that the
probability that both at times $T_{i-1}+k_1hn$ and $T_{i-1} + k_2hn$
the two walkers are in the same copy of $G$ is $O( h^{-1}
(k_1(k_2-k_1))^{-1/2})$. Lemma~\ref{inexpander} shows that the
probability that in each of these occurrences the walkers are not in
the respective expanders $E_h$ is $O(h^{-2})$, and the lemma follows.
\end{pf}
\begin{pf*}{Proof of Lemma~\ref{momarg}} Lemma~\ref{firstmoment}
gives that
\[
\sum_{k=1}^{h} \prob(I_k \mid
X_{T_{i-1}}, Y_{T_{i-1}}) \geq c,
\]
and Lemma~\ref{secondmoment} yields that
\[
\sum_{k_1=1}^h \sum
_{k_2=1}^h \prob(I_{k_1} I_{k_2}
\mid X_{T_{i-1}}, Y_{T_{i-1}}) \leq C.
\]
The lemma follows by the inequality $\prob(X>0)\geq(\E X)^2/\E X^2$
valid for any nonnegative random variable $X$.
\end{pf*}

\section*{Acknowledgments}

The authors thank Omer Angel, Noam
Berger, Gady Kozma, Russ Lyons, Elchanan Mossel, Yuval Peres and Perla
Sousi for useful discussions. We thank the anonymous referee for many
useful comments and suggestions.


%

\printaddresses

\end{document}